\theoremstyle{plain}
\newtheorem{theorem}{Theorem}[section]
\newtheorem{lemma}[theorem]{Lemma}
\newtheorem{prop}[theorem]{Proposition}
\newtheorem{cor}[theorem]{Corollary}
\numberwithin{equation}{section}
\theoremstyle{definition}
\newtheorem{definition}[theorem]{Definition}
\newtheorem{rem}[theorem]{Remark}
\newtheorem{ex}[theorem]{Example}
\newcommand\C{{\mathbb C}}
\newcommand\R{{\mathbb R}}
\newcommand{\kset}{{\mathbb k}}
\newcommand{\la}{\lambda}
\newcommand{\La}{\Lambda}
\newcommand{\Ag}{\mathrm{Aut\,}\mathfrak{g}}
\newcommand{\Ai}{{(\mathrm{Aut\,}\mathfrak{g})}^{\circ}}
\newcommand{\Span}{\mathrm{span}}
\newcommand{\Char}{\mathrm{char}}
\newcommand{\ad}{\mathrm{ad}}
\newcommand{\Ric}{\mathrm{Ric}}
\newcommand{\tr}{\mathrm{tr}\,}
\newcommand{\rank}{\mathrm{rank}}
\let\emptyset\varnothing
\begin{document}

\title{Solvable Lie algebras and graphs}
\date{March 27, 2017}

\author[G.~Grantcharov]{Gueo Grantcharov}
\address{G.~Grantcharov, Department of Mathematics and Statistics, Florida International University, Miami, FL 33199, USA}
\email{grantchg@fiu.edu}
 \thanks{The first author is partially supported by Simons Foundation Grant \#246184.}

\author[V.~Grantcharov]{Vladimir Grantcharov}
\address{V.~Grantcharov, School of Computer Science, Georgia Institute of Technology,
Atlanta, GA 30332--0765, USA}
\email{vlad321@gatech.edu}

\author[P.~Iliev]{Plamen~Iliev}
\address{P.~Iliev, School of Mathematics, Georgia Institute of Technology,
Atlanta, GA 30332--0160, USA}
\email{iliev@math.gatech.edu}
\thanks{The third author is partially supported by Simons Foundation Grant \#280940.}

\begin{abstract}
We define a solvable extension of the graph $2$-step nilpotent Lie algebras of \cite{DM} by adding elements corresponding to the $3$-cliques of the graph. We study some of their basic properties and we prove that two such Lie algebras are isomorphic if and only if their graphs are isomorphic.
We also briefly discuss some metric properties, providing examples of homogeneous spaces with nonpositive curvature operator and solvsolitons.
\end{abstract}

\maketitle

\section{Introduction}\label{se1}

Attaching algebraic objects to combinatorial structures is a well-known tool to study properties both on the combinatorial, and on the algebraic side. Often it also has applications in geometry. One such example is \cite{DM}, where Dani and Mainkar defined a $2$-step nilpotent Lie algebra associated to any simple graph. Later, Mainkar \cite{M} showed that such algebras are isomorphic if and only if the corresponding graphs are isomorphic. The construction has been used to study various geometric properties of the Lie algebras of graph type and, in particular, the existence of symplectic structures \cite{PT}, possible nilsolitons and Einstein solvmanifolds defined by solvable extensions \cite{LW1, L2, Laf}. Often additional structures on graphs lead to similar constructions - in \cite{R} a 3-step nilpotent metric Lie algebra has been associated to  Schreier graph. Recently \cite{PS}, the uniform nilpotent Lie algebras, which have extensions as Einstein solvmanifolds, have been encoded via a class of regular edge-colored graphs called uniformly colored graphs.

In this paper we initiate a study of solvable Lie algebras associated to another structure on a graph - the set of its $3$-cliques (or $3$-cycles). The definitions and constructions extend easily to $k$-cliques for any $k\geq 3$ and thus are related to one of the well-known NP-complete problems - the description of $k$-cliques in a graph. For a better clarity however, we focus on the case $k=3$. This case also provides most of the examples relevant to the metric properties of the corresponding Lie group studied in Section~\ref{se4}.

The solvable Lie algebras constructed in the present paper are extensions of the nilpotent graph Lie algebras of \cite{DM} obtained by adding also elements corresponding to the $3$-cliques with diagonal adjoint endomorphisms. In Section~\ref{se2} we collect some of the basic properties of these algebras and, in particular, we characterize the center and the nilradical. Our main result (Section~\ref{se3}) is that two such algebras are isomorphic if and only if their defining graphs are isomorphic. The proof uses properties of maximal tori of the automorphism group, but its construction is slightly different from the one in the nilpotent case \cite{M}. One of the main difficulties here is that the toral group generated by the automorphisms acting diagonally on the standard basis is no longer maximal and thus requires a more detailed analysis. In the last section, we describe some metric properties for solvable $3$-clique graph algebras for which every vertex is adjacent to a $3$-clique. They provide examples of homogeneous spaces of nonpositive curvature operator. The commutator of such algebras is the nilpotent algebra from \cite{DM} of the graph. Using this fact we also show that they define solvsolitons if and only if their commutator defines a nilsoliton, and that the natural basis we choose is stably Ricci diagonal. Finally we mention that the simply connected Lie groups of such algebras, endowed with left-invariant metrics, are isometric if and only if the corresponding graphs are.

\section{Cliques and solvable Lie algebras}\label{se2}

In this section we define a Lie algebra associated to a graph, which extends the definitions in \cite{DM, M}.
Recall that for a Lie algebra $\mathfrak{g}$ we have two descending series of subalgebras: $\mathfrak{g}^k = [\mathfrak{g},\mathfrak{g}^{k-1}]$ and $\mathfrak{g}^{(k)} = [\mathfrak{g}^{(k-1)},\mathfrak{g}^{(k-1)}]$, where $\mathfrak{g}^1 = \mathfrak{g}^{(1)}=\mathfrak{g}'=[\mathfrak{g},\mathfrak{g}]$. The Lie algebra $\mathfrak{g}$ is called {\it n-step nilpotent} if $\mathfrak{g}^{n} = 0$ but $\mathfrak{g}^{n-1}\neq 0$, and $\mathfrak{g}$ is called {\it n-step solvable} if $\mathfrak{g}^{(n)}=0$, $\mathfrak{g}^{(n-1)}\neq 0$.

Let $\kset$ be an arbitrary field.
For a simple finite graph $\mathcal{G}$ with a set of vertices $V(\mathcal{G})$ numbered by $1,2,\dots,n$ and edges $E(\mathcal{G})$ denoted by $(ij)$, we assign a vector $e_i$ to each of vertex and we  define the spaces
$$V=\Span_{\kset}\{e_i| i\in V(\mathcal{G}) \}\qquad {\text{ and }}\qquad W=\Span_{\kset}\{e_i \wedge e_j| (ij) \in E(\mathcal{G})\}\subset \Lambda^2(V).$$
Let $C(\mathcal{G})$ be the set of $3$-cliques $(ijk)$, $i<j<k,$ of the graph $\mathcal{G}$ and consider the space $U$ defined by
$$U=\Span_{\kset}\{e_{ijk}=e_i\odot e_j\odot e_k \in S^3(V)| i<j<k, (ijk) \in C(\mathcal{G})\}.$$
To simplify the notations in rest of the paper, we omit the explicit dependence on the field $\kset$, unless we need to consider spaces and algebras with respect to different fields.

\begin{definition}\label{def1}
Define nontrivial brackets between elements of the set $\{e_{i}\}_{i\in V(\mathcal{G})}\cup \{e_{i}\wedge e_{j}\}_{(ij)\in E(\mathcal{G})}\cup \{e_{ijk}\}_{(ijk)\in C(\mathcal{G})}$ as follows:
\begin{align*}
&[e_i, e_j] = e_i \wedge e_j, && \text{if } (ij)\in E(\mathcal{G}),\\
&[e_a, e_{ijk}] = e_a, &&\text{if }  a \in \{i,j,k\},\\
&[e_a \wedge e_b, e_{ijk}] = 2(e_a \wedge e_b),  &&\text{if }  \{a,b\} \subset \{i,j,k\},\\
&[e_a \wedge e_b, e_{ijk}] = e_a \wedge e_b,  && \text{if exactly one of  $a$ and $b$ is in }\{i, j, k\},
\end{align*}
and extend it to a Lie bracket  on $\mathfrak{g} = V\oplus W\oplus U$ by linearity.
\end{definition}

\begin{rem}
Note that the space $V\oplus W$ is closed under this bracket and corresponds to the $2$-step nilpotent Lie algebra defined in \cite{DM, M} where only the vertices and edges are considered. The space $U$ is a subspace of $\Span_{\kset}\{e_{ijk}\in S^3(V)| i<j<k\}$, which is invariant under the action of the symmetric group $S_n$ of all permutations, but is not an irreducible representation when its dimension is greater than $1$.
\end{rem}
With the above definition we can prove the following theorem.
\begin{theorem}\label{th2.3}
The vector space $\mathfrak{g}$ with the bracket $[\cdot,\cdot]$ from Definition \ref{def1} is a  Lie algebra.
\end{theorem}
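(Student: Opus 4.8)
The plan is to exhibit $\mathfrak g$ as a semidirect product, so that the Jacobi identity follows from two short checks rather than a long case analysis. Bilinearity is built into Definition~\ref{def1}, and the bracket is alternating by construction: every bracket of basis vectors not listed there is declared to be $0$ --- this includes $[e_i,e_j]$ for $(ij)\notin E(\mathcal{G})$, $[e_i\wedge e_j,e_k]$, $[e_a\wedge e_b,e_{ijk}]$ when $\{a,b\}\cap\{i,j,k\}=\emptyset$, and all brackets among the $e_{ijk}$ --- and one sets $[y,x]:=-[x,y]$. Thus only the Jacobi identity requires proof.

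Set $\mathfrak{n}=V\oplus W$. By the remark following Definition~\ref{def1} this is the Dani--Mainkar Lie algebra of \cite{DM,M}, so we may take the Jacobi identity for $\mathfrak n$ as known. For each clique $c=(ijk)\in C(\mathcal{G})$ let $D_c$ be the endomorphism of $\mathfrak n$ that is diagonal in the standard basis, with $D_c(e_a)=e_a$ for $a\in\{i,j,k\}$, $D_c(e_a)=0$ otherwise, and $D_c(e_a\wedge e_b)=\bigl|\{a,b\}\cap\{i,j,k\}\bigr|\,e_a\wedge e_b$; up to sign this is $\mathrm{ad}_{\mathfrak g}(e_c)$ restricted to $\mathfrak n$. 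First I would verify that $D_c\in\mathrm{Der}(\mathfrak n)$: on the brackets $[e_a\wedge e_b,\cdot]$ (which vanish) this is immediate, while on $[e_a,e_b]=e_a\wedge e_b$ for an edge $(ab)$ it amounts to the additivity of weights $\bigl|\{a,b\}\cap\{i,j,k\}\bigr|=\bigl|\{a\}\cap\{i,j,k\}\bigr|+\bigl|\{b\}\cap\{i,j,k\}\bigr|$, which is exactly why the coefficients $2$ and $1$ appear in Definition~\ref{def1}. Since every $D_c$ is diagonal in the same fixed basis of $\mathfrak n$, the $D_c$ pairwise commute, so $e_c\mapsto D_c$ defines a Lie algebra homomorphism $\rho\colon U\to\mathrm{Der}(\mathfrak n)$ (the abelian space $U$ carrying only the zero bracket).

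Then $\mathfrak g=\mathfrak n\rtimes_\rho U$ as a Lie algebra, the bracket of Definition~\ref{def1} matching the standard semidirect-product bracket $[n_1+u_1,n_2+u_2]=[n_1,n_2]_{\mathfrak n}+\rho(u_1)n_2-\rho(u_2)n_1+[u_1,u_2]_U$ once the sign of $\rho$ is fixed as above, and the Jacobi identity for such a product is standard. Concretely, for a triple of elements one splits into cases by how many of them lie in $U$: with none in $U$ it reduces to the Jacobi identity of $\mathfrak n$; with exactly one it reduces to the derivation property of a single $D_c$; with exactly two it reduces to $[D_c,D_{c'}]=0$; with all three it is trivial. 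Equivalently one may drop the semidirect-product language and just check Jacobi directly on triples of basis vectors with the same case split. Either way, the only step that is not pure bookkeeping is the fact that each $D_c$ is a derivation, i.e.\ the weight-additivity observation above, which is a one-line verification; so I expect no real obstacle here beyond organizing the cases cleanly.
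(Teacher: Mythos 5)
Your proposal is correct, and it takes a genuinely more structural route than the paper. The paper proves Theorem~\ref{th2.3} by a direct case-by-case verification of the Jacobi identity on basis triples: one $3$-clique with two elements of $V\oplus W$ (split according to how the pair of vertices meets the clique), and two $3$-cliques with one element of $V\oplus W$. Your argument isolates exactly the two facts that make those computations come out to zero: each $D_c$ is a derivation of $\mathfrak{n}=V\oplus W$ because the weights $\bigl|\{a,b\}\cap\{i,j,k\}\bigr|$ are additive over $\{a\}$ and $\{b\}$ (this is the content of the paper's Case~1, and explains the coefficients $1$ and $2$ in Definition~\ref{def1}), and the $D_c$ pairwise commute because they are simultaneously diagonal (this is the content of the paper's Case~2.2, where the two terms $\alpha\beta(e_o\wedge e_p)$ cancel). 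Packaging $\mathfrak{g}$ as $\mathfrak{n}\rtimes_\rho U$ with $U$ abelian then finishes the proof by the standard semidirect-product criterion. Your sign bookkeeping is handled correctly: since $[e_a,e_{ijk}]=+e_a$ in Definition~\ref{def1}, one takes $\rho(e_{ijk})=-D_{ijk}$, which changes nothing since $-D_c$ is a derivation and $c\mapsto -D_c$ still lands in a commuting family. What your approach buys is a conceptual explanation of why the identity holds (and it makes the generalizations in Remarks~\ref{rem-k-clique} and~\ref{rem-param} immediate, since only weight-additivity and simultaneous diagonality are used); what the paper's approach buys is self-containedness, since it does not need to invoke the Jacobi identity for the Dani--Mainkar algebra $\mathfrak{n}$ as a known input, although that too is only a short check.
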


\begin{proof} We need only to check that the Jacobi identity holds for elements from the basis $\{e_{i}\}_{i\in V(\mathcal{G})}\cup \{e_{i}\wedge e_{j}\}_{(ij)\in E(\mathcal{G})}\cup \{e_{ijk}\}_{(ijk)\in C(\mathcal{G})}$. Since $V\oplus W$ is a Lie algebra  and brackets between elements in $U$ are always $0$, it is sufficient to check the Jacobi identity when one or two of the elements correspond to $3$-cliques. \\
{\em Case 1.} One $3$-clique $e_{klm}$ and two elements from $V\oplus W$. Note that if the elements from $V\oplus W$ correspond to a vertex and an edge, or two edges, all terms vanish. Thus we only need to consider the Jacobi identity for two vertices $e_{i}$, $e_j$ and the $3$-clique $e_{klm}$ and we can have nonzero terms only if $(ij)$ is an edge and $\{i,j\}\cap\{k,l,m\}\neq \emptyset$. There are two possible sub-cases:\\
1.1 If $\{i,j\}\subset \{k,l,m\}$, say $k=i$ and $l=j$, and we have
\begin{align*}
 [e_i, [e_j, e_{ijm}]] + [e_j, [e_{ijm}, e_i]] + [e_{ijm}, [e_i, e_j]] &= [e_i, e_j] + [e_j, -e_i] + [e_{ijm}, e_i \wedge e_j]\\
 &= e_i \wedge e_j + e_i \wedge e_j  -2(e_i \wedge e_j) = 0.
 \end{align*}
1.2 If $\{i,j\}\cap\{k,l,m\}$ consists of one element, say $k=i$, but $j\neq l, m$. Then
\begin{align*}
[e_i, [e_j, e_{ilm}]] + [e_j, [e_{ilm}, e_i]] + [e_{ilm}, [e_i, e_j]] &= [e_i, 0] + [e_j, -e_i] + [e_{ilm}, e_i \wedge e_j]\\
& = 0 + e_i \wedge e_j + (-e_i \wedge e_j) = 0.
\end{align*}
{\em Case 2.} Two $3$-cliques and one element corresponding to a vertex or an edge.\\
2.1 In the case of two $3$-cliques and one vertex: $e_{ijk}, e_{lmn}, e_o$ we may assume  $o = i = l$ (otherwise all terms are $0$) and we have
\begin{align*}
[e_{ijk}, [ e_{imn}, e_i]] + [e_{imn}, [e_i, e_{ijk}]] + [e_i, [e_{ijk}, e_{imn}] &= [e_{ijk}, -e_i] + [e_{imn}, e_i] + [e_i, 0] \\
&= e_i + (-e_i) + 0 = 0.
\end{align*}
2.2 Two $3$-cliques and one edge: $e_{ijk}, e_{lmn}, e_o\wedge e_p$. By definition  $[e_o\wedge e_p,e_{ijk}]=\alpha (e_o\wedge e_p)$ and $[e_o\wedge e_p,e_{lmn}]=\beta (e_o\wedge e_p)$, where $\alpha,\beta \in \{0,1,2\}$, hence
\begin{align*}
&[e_o\wedge e_p, [e_{ijk}, e_{lmn}]] +[e_{ijk}, [e_{lmn}, e_o\wedge e_p]] + [e_{lmn}, [e_o\wedge e_p, e_{ijk}]\\
&\qquad=0-\beta[e_{ijk}, e_o\wedge e_p]+\alpha [e_{lmn}, e_o\wedge e_p]=\alpha\beta (e_o\wedge e_p)-\alpha\beta (e_o\wedge e_p)=0,
\end{align*}
competing the proof.
\end{proof}

In order to describe the descending series $\mathfrak{g}^{k}$ and $\mathfrak{g}^{(k)}$ of subalgebras we introduce the following notations.
For a graph  $\mathcal{G}$  we denote by
$$\Delta = (V_{\Delta}, E_{\Delta})$$
the subgraph of $\mathcal{G}$ with  vertices which belong to some $3$-clique and the edges between them. Thus every vertex and edge of $\Delta$ is included in some $3$-clique. Similarly let
$$ \Gamma = (V_{\Gamma}, E_{\Gamma})$$
be the graph containing the remaining vertices in $\mathcal{G}$: $V_{\Gamma} = V({\mathcal{G}}) \setminus V_{\Delta}$ and the edges between them. Also denote by $E_{\Delta,\Gamma}$ the edges with one vertex in $V_{\Delta}$ and another in $V_{\Gamma}$. In particular we have:
$$ V(\mathcal{G}) = V_{\Delta}\cup V_{\Gamma}\qquad \text{ and }\qquad E({\mathcal{G}}) = E_{\Delta}\cup E_{\Gamma}\cup E_{\Delta,\Gamma}.$$
With these notations, it is straightforward to describe explicitly the descending series $\mathfrak{g}^{k}$ and $\mathfrak{g}^{(k)}$.
\begin{prop}\label{prop2.4}
For the Lie algebra $\mathfrak{g}$ constructed from a graph $\mathcal{G}$ we have:
\begin{itemize}
\item[(a)] $\mathfrak{g'} = \Span \{ e_{\alpha}  | \alpha \in V_{\Delta}\} \oplus
\Span \{e_{\alpha} \wedge e_{\beta} | (\alpha\beta) \in E(\mathcal{G})\}$;
\item[(b)] $\mathfrak{g}^{(2)} = \Span \{e_{\alpha} \wedge e_{\beta} | (\alpha \beta) \in E_{\Delta}\}$;
\item[(c)] $\mathfrak{g}^{(3)}  =  0$;
\item[(d)] $\mathfrak{g}^{k} =
\Span \{ e_{\alpha} | \alpha \in V_{\Delta} \} \oplus \Span \{ e_{\alpha} \wedge e_{\beta} |  (\alpha \beta) \in E_{\Delta}\cup E_{\Delta,\Gamma}\}$ for $k\geq 2$.
\end{itemize}
\end{prop}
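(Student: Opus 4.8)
The proof is a direct computation, organized around the coarse bracket relations that follow immediately from Definition~\ref{def1}:
$$[V,V]\subseteq W,\qquad [V,U]\subseteq \Span\{e_\alpha\mid \alpha\in V_\Delta\},\qquad [W,U]\subseteq W,$$
while all remaining brackets among $V$, $W$, $U$ vanish; in particular $[V,W]=[W,W]=[U,U]=0$. The only input here beyond reading off the four families of nonzero brackets is that $[e_a,e_{ijk}]$, resp.\ $[e_a\wedge e_b,e_{ijk}]$, is nonzero only when $a$, resp.\ one of $a$ and $b$, lies in the $3$-clique $(ijk)$, and such a vertex is by definition a vertex of $\Delta$.

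For (a) I would expand $\mathfrak{g}'=[\mathfrak{g},\mathfrak{g}]$: the $W$-part already comes from $[V,V]$, which spans all of $W$ since $[e_i,e_j]=e_i\wedge e_j$ for every edge; the $V$-part comes from $[V,U]$, and $[e_a,e_{ijk}]=e_a$ realizes exactly the $e_\alpha$ with $\alpha\in V_\Delta$; the term $[W,U]$ only feeds back into $W$. This proves (a). Next, $\mathfrak{g}^{(2)}=[\mathfrak{g}',\mathfrak{g}']$: since $\mathfrak{g}'$ has no $U$-component, the only surviving bracket is $[\Span\{e_\alpha\mid\alpha\in V_\Delta\},\Span\{e_\alpha\mid\alpha\in V_\Delta\}]$, which is spanned by those $e_\alpha\wedge e_\beta$ with $(\alpha\beta)\in E(\mathcal{G})$ and both endpoints in $V_\Delta$, i.e.\ $(\alpha\beta)\in E_\Delta$; this is (b). Then $\mathfrak{g}^{(2)}\subseteq W$ together with $[W,W]=0$ forces $\mathfrak{g}^{(3)}=0$, which is (c).

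For (d) I would first compute $\mathfrak{g}^2=[\mathfrak{g},\mathfrak{g}']$, using $\mathfrak{g}'=\Span\{e_\alpha\mid\alpha\in V_\Delta\}\oplus W$. Bracketing $V$ with $\Span\{e_\alpha\mid\alpha\in V_\Delta\}$ gives $[e_i,e_\alpha]=\pm e_i\wedge e_\alpha$, producing precisely the edge vectors with at least one endpoint in $V_\Delta$, i.e.\ those indexed by $E_\Delta\cup E_{\Delta,\Gamma}$; bracketing $U$ with $W$ yields only a subset of these; and $[e_{ijk},e_\alpha]=-e_\alpha$ recovers all of $\Span\{e_\alpha\mid\alpha\in V_\Delta\}$. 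Hence $\mathfrak{g}^2$ equals the space in (d). Note the edges of $E_\Gamma$ have dropped out: such an $e_\alpha\wedge e_\beta$ is annihilated by all of $U$ and cannot be written as $[e_\gamma,e_\delta]$ with $\delta\in V_\Delta$, so it leaves no trace already at step $2$.

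It remains to show the lower central series stabilizes, $\mathfrak{g}^k=\mathfrak{g}^2$ for all $k\ge 2$. One inclusion is automatic; for the other I would rerun the computation on $\mathfrak{g}^3=[\mathfrak{g},\mathfrak{g}^2]$. Since $\mathfrak{g}^2\subseteq V\oplus W$, the contributions are $[V,\Span\{e_\alpha\mid\alpha\in V_\Delta\}]$, regenerating every $e_\alpha\wedge e_\beta$ with $(\alpha\beta)\in E_\Delta\cup E_{\Delta,\Gamma}$; $[U,\Span\{e_\alpha\mid\alpha\in V_\Delta\}]$, regenerating $\Span\{e_\alpha\mid\alpha\in V_\Delta\}$; and $[U,\Span\{e_\alpha\wedge e_\beta\mid(\alpha\beta)\in E_\Delta\cup E_{\Delta,\Gamma}\}]$, where for each such edge one endpoint sits in some $3$-clique $(ijk)$, so $[e_{ijk},e_\alpha\wedge e_\beta]$ is a nonzero multiple of $e_\alpha\wedge e_\beta$. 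Thus $\mathfrak{g}^3=\mathfrak{g}^2$, and induction closes (d). The one step that needs genuine attention — the rest being bookkeeping on the four bracket types — is exactly this self-reproduction of $\mathfrak{g}^2$: that both the vertex vectors $e_\alpha$ ($\alpha\in V_\Delta$) and all edge vectors indexed by $E_\Delta\cup E_{\Delta,\Gamma}$ come back after one more bracket with $\mathfrak{g}$, which is what pins the series at step $2$.
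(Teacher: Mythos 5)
Your computation is correct and is precisely the direct verification the paper has in mind: the paper offers no proof of Proposition~\ref{prop2.4}, declaring it ``straightforward'' from the coarse bracket relations $[V,V]\subseteq W$, $[V,U]\subseteq\Span\{e_\alpha\mid\alpha\in V_\Delta\}$, $[W,U]\subseteq W$ that you isolate. (Only a cosmetic remark: in the stabilization step for (d), the appeal to $[e_{ijk},e_\alpha\wedge e_\beta]$ being a nonzero multiple is redundant --- and the multiple could be $2=0$ in characteristic $2$ --- but your $[V,\Span\{e_\alpha\mid\alpha\in V_\Delta\}]$ contribution already regenerates all of $\Span\{e_\alpha\wedge e_\beta\mid(\alpha\beta)\in E_\Delta\cup E_{\Delta,\Gamma}\}$, so nothing is lost.)
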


In particular, from the above proposition we see that $\mathfrak{g}$ is solvable.
\begin{cor}
The Lie algebra $\mathfrak{g}$ constructed from the graph $\mathcal{G}$ is solvable, obtained as an abelian extension of a nilpotent Lie algebra. 
\end{cor}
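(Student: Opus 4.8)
The plan is to read the statement off directly from the bracket table in Definition~\ref{def1} together with Proposition~\ref{prop2.4}. Concretely, I would put $\mathfrak{n}:=V\oplus W$ and $\mathfrak{a}:=U=\Span\{e_{ijk}\mid (ijk)\in C(\mathcal{G})\}$, and show that $\mathfrak{g}=\mathfrak{n}\rtimes\mathfrak{a}$ with $\mathfrak{n}$ a nilpotent ideal and $\mathfrak{a}$ an abelian complement; solvability is then automatic.

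First I would recall, as noted in the Remark following Definition~\ref{def1}, that $(\mathfrak{n},[\cdot,\cdot])$ is exactly the $2$-step nilpotent graph Lie algebra of \cite{DM, M}; in particular $\mathfrak{n}$ is nilpotent, with $\mathfrak{n}'=W$ and $\mathfrak{n}^2=0$. Next I would check that $\mathfrak{n}$ is an ideal of $\mathfrak{g}$: from Definition~\ref{def1} one has $[V,V]\subseteq W$, $[V,W]=[W,W]=0$, while $[e_{ijk},e_a]\in V$ and $[e_{ijk},e_a\wedge e_b]\in W$, so $\ad(e_{ijk})$ preserves both $V$ and $W$ and hence $[\mathfrak{g},\mathfrak{n}]\subseteq\mathfrak{n}$. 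Finally $\mathfrak{a}=U$ is an abelian subalgebra, since all brackets among the $e_{ijk}$ vanish (as already used in the proof of Theorem~\ref{th2.3}). Since $\mathfrak{g}=\mathfrak{n}\oplus\mathfrak{a}$ as vector spaces, this exhibits $\mathfrak{g}$ as the split extension $0\to\mathfrak{n}\to\mathfrak{g}\to\mathfrak{a}\to 0$ of the abelian Lie algebra $\mathfrak{a}$ by the nilpotent Lie algebra $\mathfrak{n}$, which is the claimed abelian extension of a nilpotent Lie algebra.

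For solvability I would invoke Proposition~\ref{prop2.4}: part (a) gives $\mathfrak{g}'\subseteq\mathfrak{n}$, so $\mathfrak{g}^{(k)}\subseteq\mathfrak{n}^{(k-1)}$ for all $k\geq 1$, and since $\mathfrak{n}$ is nilpotent its derived series terminates; alternatively, part (c) already states $\mathfrak{g}^{(3)}=0$ outright. Either way $\mathfrak{g}$ is solvable. I do not expect any genuine obstacle here: the only point that requires a moment's attention is verifying that $V\oplus W$ is an \emph{ideal} and not merely a subalgebra, and this is immediate from the last three lines of Definition~\ref{def1}, which say precisely that $\ad(e_{ijk})$ acts diagonally on the distinguished basis of $W$ and sends $V$ into $V$.
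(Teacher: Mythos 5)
Your proof is correct and takes essentially the same route as the paper: the paper deduces solvability directly from Proposition~\ref{prop2.4} (in particular $\mathfrak{g}^{(3)}=0$) and reads the abelian-extension structure off the construction, with $V\oplus W$ the nilpotent ideal of \cite{DM} and $U$ the abelian complement. Your verification that $V\oplus W$ is an ideal and that $U$ is an abelian subalgebra just makes explicit what the paper leaves implicit.
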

Note also that the Lie algebra $\mathfrak{g}$ is not nilpotent if the graph $\mathcal{G}$ has at least one clique.
Next we characterize the center of $\mathfrak{g}$. Since specific linear combinations of elements in $U$ belong to the center, it is convenient to present the conditions in terms of the {\it $3$-clique incidence matrix}.

\begin{definition}
The $3$-clique incidence matrix of a graph is a matrix $A$ with rows corresponding to the vertices and columns corresponding to $3$-cliques, such that
the entry of $A$ in the row corresponding to the vertex $i$ and $3$-clique $(klm)$  is 1 if $i\in \{k,l,m\}$ and 0 otherwise.
\end{definition}
We can think of the matrix $A$ as a linear transformation from $U$ to $V$ and therefore $\ker(A)\subset U$.
Using this definition, we obtain the following description of the center and the nilradical (the maximal nilpotent ideal) of $\mathfrak{g}$.
\begin{prop}\label{ZandNR}
If $\Char(\kset)\neq 2$ then
\begin{itemize}
\item[(i)]  The center $Z(\mathfrak{g})$ of the solvable Lie algebra $\mathfrak{g}$ is:
\begin{equation}
Z(\mathfrak{g})  =  \Span \{e_{\alpha} | {\alpha} \text{ is an  isolated vertex} \}
 \oplus \Span \{e_{\alpha} \wedge e_{\beta} | (\alpha\beta)\in E_{\Gamma}\}
 \oplus \ker(A) \label{fc}
\end{equation}
where $A$ is the $3$-clique incidence matrix of the graph $\mathcal{G}$.
\item[(ii)]  The nilradical $NR(\mathfrak{g})$ is given by
\begin{align}
NR(\mathfrak{g}) = V\oplus W \oplus \ker(A).
\end{align}
\end{itemize}
\end{prop}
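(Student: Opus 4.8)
The plan is to read off both statements directly from the bracket relations of Definition~\ref{def1}, using the decomposition $\mathfrak g = V\oplus W\oplus U$ and the fact that each $\ad_{e_{ijk}}$ acts diagonally on the basis $\{e_i\}\cup\{e_i\wedge e_j\}$ with eigenvalues in $\{0,1,2\}$, while $[U,U]=0$.

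For part~(i) I would take a general $x = v+w+u$ with $v=\sum_i a_i e_i$, $w=\sum b_{ij}\,e_i\wedge e_j$, $u=\sum c_{ijk}\,e_{ijk}$ and impose $[x,y]=0$ as $y$ runs over the three families of basis vectors, in that order. Bracketing against a vertex $e_\ell$ gives an element of $V$ coming from $u$, namely $-\bigl(\sum_{(ijk)\ni\ell}c_{ijk}\bigr)e_\ell = -(Ac)_\ell\,e_\ell$, plus an element of $W$ coming from $v$; since $V\cap W=0$ and the relevant $e_i\wedge e_\ell$ are linearly independent, this forces $a_i=0$ for every non-isolated vertex and $Ac=0$, i.e.\ $u\in\ker(A)$. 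Bracketing against an edge $e_\ell\wedge e_m$ produces only $\bigl((Ac)_\ell+(Ac)_m\bigr)(e_\ell\wedge e_m)$, already zero since $u\in\ker(A)$, so no new condition arises. Bracketing against a clique $e_{\ell mn}$ annihilates the $V$-part by the previous step (clique vertices are non-isolated) and produces from $w$ a combination $\sum_{(ij)}\varepsilon_{ij}\,b_{ij}\,(e_i\wedge e_j)$ with $\varepsilon_{ij}=2$ when $\{i,j\}\subset\{\ell,m,n\}$ and $\varepsilon_{ij}=1$ when exactly one endpoint lies in $\{\ell,m,n\}$; determining which edges admit a clique with $\varepsilon_{ij}\neq0$ shows these are precisely the edges of $E_\Delta$ (forcing $2b_{ij}=0$) and of $E_{\Delta,\Gamma}$ (forcing $b_{ij}=0$), so that $\Char(\kset)\neq2$ yields $b_{ij}=0$ for every edge not in $E_\Gamma$. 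The reverse inclusion is a short check that isolated-vertex vectors, edge vectors from $E_\Gamma$, and vectors in $\ker(A)$ bracket to $0$ with every basis element.

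For part~(ii) I would first show that $\mathfrak n:=V\oplus W\oplus\ker(A)$ is a nilpotent ideal: it is an ideal because $V\oplus W$ is one (immediate from Definition~\ref{def1}) and $\ker(A)\subseteq Z(\mathfrak g)$ by part~(i), and it is $2$-step nilpotent because $[\mathfrak n,\mathfrak n]=[V,V]\subseteq W$ while $[\mathfrak n,W]=0$. For maximality I would invoke the standard fact that every element of a nilpotent ideal is ad-nilpotent on the whole algebra (iterate $\ad_x$ down the lower central series of the ideal, whose terms are ideals of $\mathfrak g$), reducing the claim to: if $u\notin\ker(A)$, then $\ad_x$ is not nilpotent for $x=v+w+u$. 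To see this I would choose a vertex $\ell$ with $\lambda_\ell:=(Ac)_\ell\neq0$, observe that $\Span\{e_\ell\}\oplus W$ is $\ad_x$-invariant (since $[x,e_\ell]=-\lambda_\ell e_\ell$ plus a term in $W$, and $[x,W]\subseteq W$), and note that on this subspace $\ad_x$ is block lower-triangular with the nonzero scalar $-\lambda_\ell$ in the $(1,1)$ slot, hence has $-\lambda_\ell$ as an eigenvalue and is not nilpotent; therefore neither is $\ad_x$ on $\mathfrak g$. This gives $NR(\mathfrak g)\subseteq\mathfrak n$, and with the nilpotent-ideal property of $\mathfrak n$ we conclude $NR(\mathfrak g)=\mathfrak n$.

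The main obstacle I expect is the combinatorial bookkeeping in part~(i): matching the coefficients $1$ and $2$ in $[w,e_{\ell mn}]$ to the partition $E(\mathcal G)=E_\Delta\cup E_\Gamma\cup E_{\Delta,\Gamma}$, and recognizing this as exactly the step where $\Char(\kset)\neq2$ is needed (for $\Char(\kset)=2$, already for $\mathcal G=K_3$ the edge vectors become central and the formula for $Z(\mathfrak g)$ fails). The remaining work — the ideal and nilpotency verifications and the ad-nilpotency argument — is routine once one notices the identity $[e_a,u]=(Ac)_a\,e_a$ for $u=\sum c_{ijk}e_{ijk}$, which is what makes $\ker(A)$ central and, when $u\notin\ker(A)$, supplies the nonzero eigenvalue obstructing nilpotency.
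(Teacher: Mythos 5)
Your proposal is correct and follows essentially the same route as the paper: write a general element, bracket it against the basis vectors of $V$, $W$, $U$, and read off the conditions $a_i=0$ for non-isolated vertices, $Ac=0$, and $\varepsilon_{ij}b_{ij}=0$ (with $\varepsilon_{ij}\in\{1,2\}$, which is where $\Char(\kset)\neq 2$ enters), then observe for the nilradical that $V\oplus W\oplus\ker(A)$ is a $2$-step nilpotent ideal while any $x$ with $U$-component outside $\ker(A)$ has $\ad_x$ with a nonzero eigenvalue. The only cosmetic differences are that the paper disposes of the edge brackets via a Jacobi-identity shortcut rather than your direct computation, and your maximality argument for the nilradical is spelled out in slightly more detail than the paper's one-line version.
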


\begin{proof}
Note first that if $[l,e_i]=0$ for all vertices $i$, then $[l,e_i\wedge e_j]=0$ for all edges $(ij)$. This follows from the Jacobi identity for the elements $l$, $e_i$ and $e_j$:
\begin{align*}
[e_i, [e_j, l]] + [e_j, [l, e_i]] + [l, e_i\wedge e_j] = 0.
\end{align*}
Thus, $l\in Z(\mathfrak{g})$ if and only if $l$ commutes with the elements in $\mathfrak{g}$ corresponding to all vertices and 3-cliques. To compute these brackets, we introduce more notations:
\begin{align*}
V(i)&=\{\alpha | (i\alpha) \in E(\mathcal{G})\},\\
C(i)&=\{(\alpha\beta) | (i\alpha\beta)\in C(\mathcal{G})\}.
\end{align*}
We can write an element $l\in \mathfrak{g}$ as a linear combination
\begin{equation}\label{ll}
l=\sum_{i\in V(\mathcal{G})}a_ie_i + \sum_{(ij)\in E(\mathcal{G})}b_{ij}(e_{i}\wedge e_{j}) + \sum_{(ijk)\in C(\mathcal{G})}c_{ijk}e_{ijk}.
\end{equation}
We can assume that initially all multi-indices above are ordered in increasing order of the vertices, i.e. $i<j$ in $e_{i}\wedge e_{j}$ and $i<j<k$ in $e_{ijk}$ for any pairs or triples of indices. Then we can extend and think of $b_{ij}$ as a skew-symmetric function of the indices (i.e. $b_{ij}=-b_{ji}$) and interpret $c_{ijk}$ as a symmetric function of the indices $i,j,k$.
The commutator of $l$ with elements corresponding to a vertex and a $3$-clique are given by the following formulas:
\begin{align}
[l,e_i]&=\sum_{j\in V(i)} a_j(e_{j}\wedge e_{i}) -\left(\sum_{(jk)\in C(i)}c_{ijk}\right)e_i,\label{cv}\\
[l,e_{ijk}]&=a_ie_i+a_je_j+a_ke_k + 2b_{ij}(e_{i}\wedge e_{j})+2b_{jk}(e_{j}\wedge e_{k})+2b_{ik}(e_{i}\wedge e_{k})\nonumber\\
&\qquad +\sum_{\begin{subarray}{c} \alpha\in V(i)\\ \alpha\neq j,k\end{subarray}}b_{i\alpha}(e_{i}\wedge e_{\alpha})
+\sum_{\begin{subarray}{c} \alpha\in V(j)\\ \alpha\neq i,k\end{subarray}}b_{j\alpha}(e_{j}\wedge e_{\alpha})
+\sum_{\begin{subarray}{c} \alpha\in V(k)\\ \alpha\neq i,j\end{subarray}}b_{k\alpha}(e_{k}\wedge e_{\alpha}).\label{cc}
\end{align}
Suppose first that $l\in Z(\mathfrak{g})$. From \eqref{cv} it follows that $a_j=0$ for every nonisolated vertex $j\in V(\mathcal{G})$ and therefore
$l_1=\sum_{i\in V(\mathcal{G})}a_ie_i \in  \Span \{e_{\alpha} | {\alpha} \text{ is an isolated vertex} \} $. Similarly, from \eqref{cc} it follows that $b_{ij}=0$ for every $(ij)\in E_{\Delta}\cup E_{\Gamma,\Delta}$ proving that $l_2= \sum_{(ij)\in E(\mathcal{G})}b_{ij}(e_{i}\wedge e_{j})\in \Span \{e_{\alpha} \wedge e_{\beta} | (\alpha\beta) \in E_{\Gamma}\} $. Finally, equation \eqref{cv} shows that
$l_3= \sum_{(ijk)\in C(\mathcal{G})}c_{ijk}e_{ijk} \in\ker(A)$, completing the proof in this direction.

Conversely, from \eqref{cv} and \eqref{cc} it is easy to see that every element in the right-hand side of equation~\eqref{fc} belongs to $Z(\mathfrak{g})$, completing the proof of the first part.

For the second part we note that $V\oplus W\oplus \ker(A)$ is clearly a nilpotent ideal and for every element $l\in U\setminus\ker(A)$, $\ad_l$ has a non-zero diagonal entry. Therefore there is no nilpotent ideal which properly contains $V\oplus W\oplus \ker(A)$.
\end{proof}

\begin{rem}\label{ap}
From equations \eqref{ll} and \eqref{cv} we see that if
\begin{equation*}
l= \sum_{(ijk)\in C(\mathcal{G})}c_{ijk}e_{ijk}\in \ker(A)=Z(\mathfrak{g})\cap U,
\end{equation*}
then
\begin{equation*}
3\sum_{(ijk)\in C(\mathcal{G})}c_{ijk}=\sum_{i\in V_{\Delta}}\sum_{(jk)\in C(i)}c_{ijk}=0.
\end{equation*}
\end{rem}

\begin{rem}\label{remc}
If $\Char(\kset)= 2$, then $Z(\mathfrak{g})$ may contain also elements from $ \Span \{ e_{\alpha} \wedge e_{\beta} |  (\alpha, \beta) \in E_{\Delta}\}$.
\end{rem}

We say that $\mathfrak{g}$  has a  {\em non-trivial center\/} if $\ker(A)\neq\{0\}$.

\begin{ex} Let $\mathfrak{g}$ be the Lie algebra  constructed from the complete graph $K_n$ and $\Char(\kset)\neq 2,3$. Note that  $A$ is an $n \times \frac{n(n-1)(n-2)}{6}$ matrix and it is easy to see that $\mathfrak{g}$ has a trivial center for every $n\leq 4$. When $n\geq 5$,  $\mathfrak{g}$ has a non-trivial center, since $x=e_{123}-e_{134}+e_{145}-e_{125}\in Z(\mathfrak{g})$. The automorphism group of $K_n$ is $S_n$ and $\ker(A)$ is an $S_n$-invariant subspace of $U$. Since the sum of all $e_{ijk}$ is a fixed vector under this action, Remark \ref{ap} corresponds to the fact that in $U$ non-intersecting invariant subspaces are orthogonal under the natural scalar product. A quick check shows that the elements $\{x, (12)x,(34)x,(12)(45)x, (143)x\}$ are linearly independent and thus
$$Z(\mathfrak{g})=\ker(A)=\Span_{\kset}\{x, (12)x,(34)x,(12)(45)x, (143)x\} \quad \text{ when }n=5.$$
More generally, we can show that $\rank(A)=n$ for $n\geq 5$ and therefore
$$\dim(Z(\mathfrak{g}))=\dim(\ker(A))=\frac{n(n-1)(n-2)}{6}-n\text{ for every } n\geq 5.$$
To see this, consider the $n\times n$ submatrix $\hat{A}$ of $A$, obtained by taking all rows and the columns corresponding to the $3$-cliques $(123)$, $(124)$, $(134)$, $(234)$, $(12j)$ for $j=5,6,\dots,n$. Note that for $j\geq 5$ the $j$-th row has only one nonzero entry in the column corresponding to the $3$-clique $(12j)$. Thus, $\hat{A}$ will be nonsingular if and only if the submatrix $\hat{A}_4$, obtained from $\hat{A}$ by taking the first $4$ rows and the columns corresponding to the $3$-cliques $(123)$, $(124)$, $(134)$, $(234)$ is nonsingular. Since
$$\hat{A}_4=\left(\begin{matrix} 1 & 1 & 1 & 0\\  1 & 1 & 0 & 1\\  1 & 0 & 1 & 1\\  0 & 1 & 1 & 1 \end{matrix}\right)\qquad \text{and }\qquad \det(\hat{A}_4)=-3,$$
we conclude that  $\rank(A)=n$.
\end{ex}

\begin{rem}\label{rem-k-clique}
It is known that the description of $k$-cliques in a graph is NP complete - existence or non-existence of polynomial time algorithm is equivalent to P versus NP problem. We can extend the constructions to define a Lie algebra associated to the set of all $k$-cliques for fixed $k\geq 3$ as follows. Let $C_k(\mathcal{G})$ denote the set of all $k$-cliques $(i_1i_2,\dots i_k)$, $i_1<i_2<\cdots i_k$ of the graph $\mathcal{G}$ and let $U_k$ be the space defined by
$$U_k=\Span_{\kset}\{e_{i_1i_2\dots i_k}=e_{i_1}\odot e_{i_2}\odot\cdots\odot e_{i_k} \in S^{k}(V)| i_1<i_2<\cdots< i_k,\, (i_1i_2\dots i_k) \in C_k(\mathcal{G})\}.$$
Similarly to Definition~\ref{def1}, we can define $\mathfrak{g}(k) = V\oplus W\oplus U_k$ with nontrivial brackets
\begin{align*}
&[e_i, e_j] = e_i \wedge e_j, && \text{if } (ij)\in E(\mathcal{G}),\\
&[e_a, e_{i_1,\dots,i_k}] = e_a, &&\text{if }  a \in \{i_1,\dots,i_k\},\\
&[e_a \wedge e_b, e_{i_1\dots i_k}] = 2(e_a \wedge e_b),  &&\text{if }  \{a,b\} \subset  \{i_1,\dots,i_k\},\\
&[e_a \wedge e_b, e_{i_1\dots i_k}] = e_a \wedge e_b,  && \text{if exactly one of  $a$ and $b$ is in } \{i_1,\dots,i_k\}.
\end{align*}
The proofs of Theorem~\ref{th2.3} and Proposition~\ref{prop2.4} can be easily extended to show that $\mathfrak{g}(k) $ is a solvable Lie algebra and its descending series, center and nilradical can be described by analogous extensions of the constructions above.
\end{rem}

\begin{rem}\label{rem-param}
The constructions can also be generalized by considering weights $\omega_i\in\kset$ on the vertices and by replacing the relations in Definition~\ref{def1} with
\begin{align*}
&[e_i, e_j] = e_i \wedge e_j, && \text{if } (ij)\in E(\mathcal{G}),\\
&[e_a, e_{ijk}] = \omega_a e_a, &&\text{if }  a \in \{i,j,k\},\\
&[e_a \wedge e_b, e_{ijk}] = \left(\sum_{s\in \{a,b\} \cap \{i,j,k\}}\omega_s\right)(e_a \wedge e_b),  &&\text{if }  \{a,b\} \cap \{i,j,k\}\neq \emptyset.
\end{align*}
The proof of Theorem~\ref{th2.3} can be easily extended to this case and  Proposition~\ref{prop2.4} still holds if we assume in addition that 
\begin{equation}\label{cond1}
\omega_i\neq 0 \qquad\text{ for all }\qquad i\in V(\mathcal{G}).
\end{equation}
Finally, if \eqref{cond1} holds and 
\begin{equation}\label{cond2}
\omega_i+\omega_j\neq 0 \qquad\text{ for all }\qquad (ij)\in E(\mathcal{G}),
\end{equation}
then Proposition~\ref{ZandNR} describes the center and the nilradical of the corresponding Lie algebra (cf. Remark~\ref{remc}). In particular, if $\kset=\C$ or $\R$ and if $\omega_i>0$ for all vertices $i$, then equations \eqref{cond1}-\eqref{cond2} are automatically satisfied and all statements established in this section apply for the weighted construction. We thank an anonymous referee for suggesting this weighted generalization of Definition~\ref{def1}. 
\end{rem}

\section{Functorial properties}\label{se3}

\subsection{Preliminary results} In this subsection we assume that the underlying field $\kset$ is algebraically closed and we set $\kset^{*}=\kset\setminus\{0\}$. We 
collect below several preliminary facts needed for the proof of the main theorem.

Let $\Ag$ be the group of Lie algebra automorphisms of $\mathfrak{g}$, and let $\Ai$ be the identity component of $\Ag$ in the Zariski topology. For an $n$-tuple
$\Lambda=(\la_1,\dots,\la_n) \in (\kset^{*})^n$ we define an element $g_{\La}\in \Ag$ by
\begin{align*}
g_{\La} (e_i) &=\la_{i} e_i \text{ for every vertex } i,\\
g_{\La}( e_{i}\wedge e_j )&=\la_{i}\la_j (e_{i}\wedge e_j )\text{ for every edge }(ij),\\
g_{\La}( e_{ijk})&=e_{ijk} \text{ for every $3$-clique }(ijk),
\end{align*}
and extended to $\mathfrak{g}$ by linearity. The torus $T_d$ generated by all $g_{\La}$ is contained in $\Ai$ and we denote by $Z_{\Ai}(T_d)$ its centralizer.

Since every algebraically closed field is infinite, it is not hard to show (say, by induction on $n$) that there exists an $n$-tuple $\La\in (\kset^{*})^n$ as above, for which the numbers:
$1$, $\la_i$ for ${i\in V(\mathcal{G})}$, and $\la_i\la_j$ for ${(ij)\in E(\mathcal{G})}$ are distinct.

\begin{lemma} \label{le1}
If $g\in Z_{\Ai}(T_d)$, then $g$ acts diagonally on all $e_i$ and $e_i\wedge e_j$.
\end{lemma}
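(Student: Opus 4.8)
The plan is to exploit the fact that $T_d$ contains an element $g_{\La}$ whose eigenvalues on the standard basis of $V \oplus W$ are pairwise distinct (and distinct from $1$), as guaranteed by the remark immediately preceding the lemma. Fix such a $\La$, and let $g \in Z_{\Ai}(T_d)$; in particular $g$ commutes with $g_{\La}$. First I would observe that $g_{\La}$ is semisimple as a linear operator on $\mathfrak{g}$, its eigenspaces being: the lines $\kset e_i$ for $i \in V(\mathcal{G})$ (eigenvalue $\la_i$), the lines $\kset (e_i \wedge e_j)$ for $(ij) \in E(\mathcal{G})$ (eigenvalue $\la_i \la_j$), and the subspace $U$ (eigenvalue $1$). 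Since $g$ commutes with $g_{\La}$, it preserves each of these eigenspaces. The eigenspaces attached to vertices and edges are one-dimensional, so $g$ acts on each by a scalar; this is precisely the statement that $g$ acts diagonally on all $e_i$ and all $e_i \wedge e_j$.

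The one subtlety is the eigenvalue $1$: a priori, if $1$ were also equal to some $\la_i$ or $\la_i\la_j$, the corresponding eigenspace would be larger than a single line and the argument would fail. This is exactly why the quoted remark insists that $1$, the $\la_i$, and the $\la_i\la_j$ be \emph{all} distinct; with that choice the eigenvalue-$1$ eigenspace of $g_{\La}$ is exactly $U$, and all the vertex and edge eigenspaces are genuinely one-dimensional. So the only place where care is needed is in invoking the correct $\La$ from the preceding paragraph of the excerpt, and then the conclusion is immediate. I do not expect any real obstacle here: the heart of the matter is simply that a linear map commuting with a diagonalizable operator preserves its eigenspaces, combined with the one-dimensionality of the relevant eigenspaces for a sufficiently generic $\La$.

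One should note that the lemma says nothing about how $g$ acts on $U$; indeed $g$ need not be diagonal there, which is consistent with the introduction's warning that $T_d$ is not maximal and that the action on the clique part requires further analysis later. For the present lemma, restricting attention to $V \oplus W$ and using a single well-chosen $g_{\La} \in T_d$ suffices.
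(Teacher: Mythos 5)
Your proposal is correct and follows essentially the same route as the paper: pick $g_{\La}\in T_d$ with the numbers $1$, $\la_i$, $\la_i\la_j$ pairwise distinct, and use that $g$ preserves the one-dimensional eigenspaces of $g_{\La}$. The only (harmless) difference is at the edge step: you again invoke one-dimensionality of the $\la_i\la_j$-eigenspace, whereas the paper instead applies $g$ to the bracket, $g(e_i\wedge e_j)=[g(e_i),g(e_j)]=\mu_i\mu_j(e_i\wedge e_j)$, which has the minor advantage of also identifying the scalar on the edge as the product of the vertex scalars.
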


\begin{proof}
Consider $g_{\La}\in T_d$ such that the numbers $1$, $\la_i$ for ${i\in V(\mathcal{G})}$, and $\la_i\la_j$ for ${(ij)\in E(\mathcal{G})}$ are distinct. For every vertex $i$ we have
$$g_{\La}(e_i)=\la_i e_i \text{ and therefore applying $g$ we find } g_{\La}(g(e_i))=\la_i g(e_i).$$
Since the eigenspace of $\la_i$ is one-dimensional it follows that $g(e_i)=\mu_i e_i$ for some $\mu_i\in\kset^{*}$. Then for every $(ij)\in E(\mathcal{G})$ we have $g(e_i\wedge e_j)=g([e_i,e_j])=\mu_i\mu_j (e_i\wedge e_j)$.
\end{proof}

\begin{lemma} \label{le2}
Suppose that $g\in Z_{\Ai}(T_d)$ is diagonalizable. Then
\begin{itemize}
\item[(i)]  $g(V)=V,$ $g(W)=W$ and $g(U)=U$.
\item[(ii)] For every $3$-clique $(ijk)\in C(\mathcal{G})$ we have $g(e_{ijk})-e_{ijk}\in Z(\mathfrak{g})$.
\item[(iii)] If $g(x)=\mu x$ for some $x\in U$ and $\mu\neq 1$, then $x\in Z(\mathfrak{g})$.
\end{itemize}
\end{lemma}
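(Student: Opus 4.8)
The plan is to exploit Lemma \ref{le1} together with the bracket relations from Definition \ref{def1}, and to use the description of $Z(\mathfrak{g})$ in Proposition \ref{ZandNR}. First I would set up coordinates: since $g$ is diagonalizable and centralizes $T_d$, Lemma \ref{le1} gives $g(e_i)=\mu_i e_i$ and $g(e_i\wedge e_j)=\mu_i\mu_j(e_i\wedge e_j)$. This already yields $g(V)=V$ and $g(W)=W$. To prove $g(U)=U$, decompose $g(e_{ijk})=v+w+u$ according to $\mathfrak{g}=V\oplus W\oplus U$ and apply $g$ to the defining relations $[e_a,e_{ijk}]=e_a$ for $a\in\{i,j,k\}$. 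Since $g$ is an automorphism, $[g(e_a),g(e_{ijk})]=g(e_a)$, i.e. $[\mu_a e_a,\,v+w+u]=\mu_a e_a$. Writing out $[e_a,\cdot]$ using \eqref{cv}, one sees that the $V$-component $v$ contributes nothing to $[e_a,v+w+u]$, the $W$-component $w$ contributes a term in $W$, and the $U$-component $u$ contributes the needed multiple of $e_a$. Matching components across $a=i,j,k$ forces $v=0$ and $w=0$, hence $g(e_{ijk})\in U$; so $g(U)\subseteq U$, and since $g$ is invertible and preserves the grading degrees, $g(U)=U$. This proves (i).

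For (ii), I would again use the automorphism property on the relations $[e_a,e_{ijk}]=e_a$ and $[e_a\wedge e_b,e_{ijk}]=\gamma(e_a\wedge e_b)$ (with $\gamma\in\{1,2\}$ depending on $|\{a,b\}\cap\{i,j,k\}|$). Set $x=g(e_{ijk})-e_{ijk}\in U$ by part (i). Applying $g$ to $[e_a,e_{ijk}]=e_a$ gives $[\mu_a e_a,g(e_{ijk})]=\mu_a e_a$, hence $[e_a,g(e_{ijk})]=e_a$, so $[e_a,x]=[e_a,g(e_{ijk})]-[e_a,e_{ijk}]=0$ for all $a\in\{i,j,k\}$; and for $a\notin\{i,j,k\}$ one has $[e_a,e_{ijk}]=0$ while $[e_a,g(e_{ijk})]$ must equal $g([e_a,e_{ijk}])/\,$(scalar)$=0$ as well, so $[e_a,x]=0$ for every vertex $a$. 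As noted at the start of the proof of Proposition \ref{ZandNR}, $[x,e_a]=0$ for all vertices implies $[x,e_a\wedge e_b]=0$ for all edges, and brackets of $U$ with $U$ vanish; hence $x$ commutes with all of $\mathfrak{g}$, i.e. $x\in Z(\mathfrak{g})$.

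For (iii), suppose $g(x)=\mu x$ with $x\in U$, $\mu\ne 1$. Expand $x=\sum_{(ijk)\in C(\mathcal{G})}c_{ijk}e_{ijk}$. By part (ii) applied to each basis vector $e_{ijk}$, writing $g(e_{ijk})=e_{ijk}+z_{ijk}$ with $z_{ijk}\in Z(\mathfrak{g})\cap U=\ker(A)$, linearity gives $g(x)=x+\sum c_{ijk}z_{ijk}$. Comparing with $g(x)=\mu x$ yields $(\mu-1)x=\sum c_{ijk}z_{ijk}\in\ker(A)$, and since $\mu\ne 1$ we conclude $x\in\ker(A)=Z(\mathfrak{g})\cap U\subset Z(\mathfrak{g})$. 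I expect the main obstacle to be the careful bookkeeping in part (i): one must check that the $W$-component of $g(e_{ijk})$ is genuinely killed, which requires tracking how $[e_a,\cdot]$ acts on $W$ versus on $U$ in \eqref{cv} and noting that the three relations for $a=i,j,k$ together leave no room for a nonzero $w$ unless one invokes a vertex $a$ not in the clique, where $[e_a,W\text{-part}]$ can be nonzero — so some graph-theoretic case analysis (e.g. whether the clique vertices have neighbours outside the clique) may be needed, but it is routine given Lemma \ref{le1}.
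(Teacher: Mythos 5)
Parts (ii) and (iii) of your proposal essentially reproduce the paper's argument (your use of the Jacobi identity to deduce commutation with edges from commutation with vertices is the same trick the paper uses in Proposition \ref{ZandNR}), but they both rest on $g(e_{ijk})\in U$, i.e.\ on part (i), and your proof of $g(U)=U$ has a genuine gap. It is exactly the point where the hypotheses that $g$ centralizes $T_d$ and is diagonalizable must be used beyond Lemma \ref{le1}, and your argument uses neither.

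Concretely, your bookkeeping is reversed: for $v\in V$ and $w\in W$ one has $[e_a,w]=0$ identically (vertex--edge brackets are trivial in Definition \ref{def1}), while $[e_a,v]=\sum_{m\in V(a)}a_m(e_a\wedge e_m)\in W$ need not vanish. Hence the relations $[e_a,g(e_{ijk})]=\epsilon e_a$ constrain only $v$ (forcing its coefficients to vanish on the neighbours of $a$) and the $V$-part coming from $u$; they say nothing about $w$. Running over all vertices $a$ you can at best conclude that $v$ is supported on isolated vertices, and the relations $[e_a\wedge e_b,g(e_{ijk})]=\gamma(e_a\wedge e_b)$ give no information about $v$ or $w$ either, since $[e_a\wedge e_b,V\oplus W]=0$. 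This cannot be repaired within your method: the linear map fixing every $e_i$, every $e_i\wedge e_j$ and every other $3$-clique and sending $e_{ijk}\mapsto e_{ijk}+e_m$ for an isolated vertex $m$ (or $e_{ijk}\mapsto e_{ijk}+e_a\wedge e_b$ with $(ab)\in E_{\Gamma}$) is a genuine Lie algebra automorphism acting diagonally on $V\oplus W$, so no manipulation of the bracket relations alone can force $g(e_{ijk})\in U$. Such maps are excluded because they do not lie in $Z_{\Ai}(T_d)$ and are not diagonalizable, and the paper's proof uses precisely these hypotheses: for an eigenvector $x=v+w+u$ with $g(x)=\mu x$, commuting $g$ with $g_{\La}$ for the scalar tuple $\La=(\la,\dots,\la)$ yields $\la g(v)+\la^{2}g(w)+g(u)=\mu(\la v+\la^{2}w+u)$ for infinitely many $\la$, whence $g(v)=\mu v$, $g(w)=\mu w$, $g(u)=\mu u$; diagonalizability then shows that $U$ is spanned by eigenvectors lying in $U$, so $g(U)=U$. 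You need to replace your argument for (i) by this (or an equivalent use of the centralizer and diagonalizability); with that fix, your (ii) and (iii) go through as written.
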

\begin{proof}
(i) Suppose that $x\in \mathfrak{g}$ is an eigenvector for $g$. We can decompose $x$ as $x=v+w+u$ where $v\in V$, $w\in W$ and $u\in U$.
The main point is to show that if $g(x)=\mu x$, then $g(v)=\mu v$, $g(w)=\mu w$ and $g(u)=\mu u$. Take arbitrary $\la\in\kset^{*}$ and consider the $n$-tuple $\La=(\la,\la,\dots,\la)$. Then $g_{\La}(x)=\la v +\la^2 w+u$ and since $g_{\La}g=gg_{\La}$ we see that
$$\la g(v)+\la^2 g(w) +g(u)=gg_{\La}(x)=g_{\La}g(x)=\mu(\la v +\la^2 w+u).$$
Since the above is true for all $\la\in\kset^{*}$ (and there are infinitely many such $\la$'s), it follows that $g(v)=\mu v$, $g(w)=\mu w$ and $g(u)=\mu u$. \\
(ii) From Lemma~\ref{le1} we know that $g(e_a)=\mu_ae_a$ for every vertex $a$.
By definition,
\begin{equation}\label{v1}
[e_a,e_{ijk}]=\epsilon e_a, \qquad \text{ where $\epsilon=1$ if $a\in \{i,j,k\}$ and $\epsilon=0$ if $a\not\in \{i,j,k\}$.}
\end{equation}
Applying $g$ we see that $[\mu_ae_a,g(e_{ijk})]=\mu_a\epsilon e_a$, hence $[e_a,g(e_{ijk})]=\epsilon e_a$. Subtracting the last equation from \eqref{v1} it follows that $[e_a,g(e_{ijk})-e_{ijk}]=0$.
Similarly, $[e_a\wedge e_b,g(e_{ijk})-e_{ijk}]=0$ for all $(ab)\in E(\mathcal{G})$. Finally, since $g(e_{ijk})\in U$ we see that $g(e_{ijk})-e_{ijk}\in Z(\mathfrak{g})$.\\
(iii) If $x=\sum c_{ijk}e_{ijk}\in U$, then from (ii) it follows that $g(x)=x+z$, where $z\in Z(\mathfrak{g})$. On the other hand $g(x)=\mu x$ and if $\mu\neq 1$, then $x=\frac{1}{\mu-1} z\in Z(\mathfrak{g})$.
\end{proof}

\begin{cor}\label{cor1}
If $g\in Z_{\Ai}(T_d)$ is diagonalizable and $\mu\neq 1$ is a simple eigenvalue of $g$, then the corresponding eigenspace is
\begin{itemize}
\item either $\kset\, e_i$ for a vertex $i\in V(\mathcal{G})$,
\item or $\kset (e_i\wedge e_j)$ for an edge $(ij)\in E(\mathcal{G})$,
\item or a one-dimensional subspace of $Z(\mathfrak{g})$.
\end{itemize}
\end{cor}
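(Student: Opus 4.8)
The plan is to reduce everything to the structural facts already obtained in Lemmas~\ref{le1} and~\ref{le2}. First I would invoke Lemma~\ref{le2}(i): since $g$ is diagonalizable and preserves each of $V$, $W$ and $U$, the restrictions $g|_V$, $g|_W$, $g|_U$ are again diagonalizable, so if $E_\mu$ denotes the $\mu$-eigenspace of $g$ then
\[
E_\mu = (E_\mu\cap V)\oplus(E_\mu\cap W)\oplus(E_\mu\cap U).
\]
(Concretely, this is the content of the proof of Lemma~\ref{le2}(i): a $\mu$-eigenvector $x=v+w+u$ has each of $v,w,u$ a $\mu$-eigenvector or zero, obtained by commuting with the scalar tori $g_{(\la,\dots,\la)}$.) Because $\mu$ is a simple eigenvalue, $\dim E_\mu=1$, hence exactly one of the three summands is nonzero and equals $E_\mu$; thus $E_\mu$ lies entirely inside $V$, inside $W$, or inside $U$.

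Next I would dispatch the three cases. If $E_\mu\subset V$, then by Lemma~\ref{le1} $g$ acts diagonally on the $e_i$, so $E_\mu$ is spanned by those $e_i$ with eigenvalue $\mu$; simplicity forces exactly one such $i$, giving $E_\mu=\kset e_i$. If $E_\mu\subset W$, then again by Lemma~\ref{le1} $g$ is diagonal on the basis $\{e_i\wedge e_j\}_{(ij)\in E(\mathcal{G})}$, and the same argument yields $E_\mu=\kset(e_i\wedge e_j)$ for a single edge $(ij)$. Finally, if $E_\mu\subset U$, choose $0\neq x\in E_\mu$; then $g(x)=\mu x$ with $\mu\neq 1$ and $x\in U$, so Lemma~\ref{le2}(iii) gives $x\in Z(\mathfrak{g})$, and therefore $E_\mu=\kset x$ is a one-dimensional subspace of $Z(\mathfrak{g})$.

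There is essentially no serious obstacle here: the corollary is a bookkeeping consequence of the two lemmas, and the only step requiring a moment's care is the first reduction, namely that the $\mu$-eigenspace respects the decomposition $\mathfrak{g}=V\oplus W\oplus U$ — which is exactly why Lemma~\ref{le2}(i) (and its proof) is needed. I would also point out that the hypothesis $\mu\neq 1$ is used only in the third case, where one divides by $\mu-1$; in the first two cases it plays no role.
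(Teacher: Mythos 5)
Your proof is correct and is exactly the argument the paper intends: the corollary is stated without proof as an immediate consequence of Lemmas~\ref{le1} and~\ref{le2}, via the same reduction (the $\mu$-eigenspace splits across $V\oplus W\oplus U$ by Lemma~\ref{le2}(i), simplicity confines it to one summand, Lemma~\ref{le1} settles the $V$ and $W$ cases, and Lemma~\ref{le2}(iii) settles the $U$ case). Your remark that $\mu\neq 1$ is only needed in the third case is also accurate.
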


\subsection{Isomorphisms between graphs and  Lie algebras}
We are now ready to prove the main theorem in this section for arbitrary (not necessarily algebraically closed) field $\kset$.

\begin{theorem}\label{main}
Two solvable Lie algebras constructed from graphs are isomorphic if and only if the graphs are isomorphic.
\end{theorem}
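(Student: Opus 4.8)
The plan is to prove the two directions separately, with the nontrivial one being that a Lie algebra isomorphism $\varphi\colon\mathfrak{g}_1\to\mathfrak{g}_2$ forces an isomorphism of graphs $\mathcal{G}_1\cong\mathcal{G}_2$. The easy direction is immediate: a graph isomorphism $\sigma\colon\mathcal{G}_1\to\mathcal{G}_2$ permutes vertices, hence sends $e_i\mapsto e_{\sigma(i)}$, $e_i\wedge e_j\mapsto e_{\sigma(i)}\wedge e_{\sigma(j)}$, $e_{ijk}\mapsto e_{\sigma(i)\sigma(j)\sigma(k)}$, and this is clearly compatible with the brackets in Definition~\ref{def1}, so it extends linearly to a Lie algebra isomorphism. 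For the hard direction, I would first reduce to the algebraically closed case: fix an algebraic closure $\bar\kset$ of $\kset$; an isomorphism $\mathfrak{g}_1\cong\mathfrak{g}_2$ over $\kset$ induces one over $\bar\kset$ (the structure constants are defined over the prime field), and a graph isomorphism can be detected over any field, so it suffices to work over $\bar\kset$. From now on assume $\kset$ is algebraically closed.

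The strategy over an algebraically closed field is the classical maximal-torus argument à la Mainkar~\cite{M}, adapted to the solvable setting using the preliminary lemmas. Let $T_d^{(m)}\subset\Ai(\mathfrak{g}_m)$ be the diagonal torus from the excerpt. An isomorphism $\varphi$ conjugates $\Ai(\mathfrak{g}_1)$ to $\Ai(\mathfrak{g}_2)$, hence sends some maximal torus $T^{(1)}\supseteq T_d^{(1)}$ to a maximal torus $T^{(2)}$ of $\Ai(\mathfrak{g}_2)$; after composing $\varphi$ with an inner automorphism we may assume $\varphi(T^{(1)})=T^{(2)}$, and then — since all maximal tori are conjugate and $T_d^{(m)}$ is contained in one — we may further arrange that $\varphi$ intertwines the distinguished tori, or at least their centralizers. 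The point of Lemma~\ref{le1} and Lemma~\ref{le2} is exactly that elements of $Z_{\Ai}(T_d)$ act diagonally on the standard basis of $V\oplus W$ and act on $U$ by adding a central element; the subtlety flagged in the introduction is that $T_d$ need not be maximal, so one must carry along the extra toral directions coming from $U\setminus\ker(A)$ and check they are handled compatibly. Concretely, I would: (1) use Corollary~\ref{cor1} to recognize $V$ inside $\mathfrak{g}_m$ intrinsically, namely as the span of the $1$-dimensional non-central eigenspaces $\kset e_i$ that are \emph{not} contained in $\mathfrak{g}'$ together with those that are — distinguished from the edge-eigenspaces $\kset(e_i\wedge e_j)$ by the bracket structure (an edge vector lies in $\mathfrak{g}^{(2)}$ or pairs to give it, whereas a vertex vector does not), so that a generic $g\in Z_{\Ai}(T_d)$ has eigenspace decomposition that reads off $V$, $W$ and $Z(\mathfrak{g})\cap U$; (2) conclude that (after the torus normalization) $\varphi$ maps $V_1$ to $V_2$ and $W_1$ to $W_2$, inducing a bijection on the standard basis vectors $\{e_i\}$ up to scalars, hence a bijection $\tau\colon V(\mathcal{G}_1)\to V(\mathcal{G}_2)$; (3) since $\varphi$ sends $\mathfrak{g}_1'=[\mathfrak{g}_1,\mathfrak{g}_1]$ to $\mathfrak{g}_2'$ and $[e_i,e_j]\neq 0$ exactly when $(ij)$ is an edge, conclude $(ij)\in E(\mathcal{G}_1)\iff(\tau(i)\tau(j))\in E(\mathcal{G}_2)$, i.e. $\tau$ is a graph isomorphism.

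The main obstacle, as the authors themselves signal, is step (1)–(2): making the maximal-torus normalization honest when $T_d$ is not maximal. One must understand the full maximal torus of $\Ai(\mathfrak{g})$ — it contains $T_d$ plus the one-parameter subgroups $e_{ijk}\mapsto t\,e_{ijk}$ acting on the $U$-direction (modulo the constraint imposed by $\ker A$ and the relations $[e_a,e_{ijk}]=e_a$) — and show that conjugation by $\varphi$ respects the internal structure of this torus well enough to still force $V\mapsto V$, $W\mapsto W$. A clean way around the awkwardness is to avoid conjugating tori directly and instead argue with centralizers: show that $\varphi$ can be adjusted so that $\varphi\circ Z_{\Ai}(T_d^{(1)})\circ\varphi^{-1}=Z_{\Ai}(T_d^{(2)})$, then run Lemma~\ref{le2} and Corollary~\ref{cor1} on both sides simultaneously and compare generic eigenspace data. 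A secondary technical point is that the vertex-eigenspace $\kset e_i$ for an isolated vertex $i$ lies in $Z(\mathfrak{g})$, so isolated vertices must be matched using the center $Z(\mathfrak{g}_m)$ and its intersection with $V$ (Proposition~\ref{ZandNR}); and one must separate the central line contributions in $U$ from vertex/edge lines, which is exactly what Corollary~\ref{cor1} provides. Once $V$ and $W$ are matched, everything else is bookkeeping.
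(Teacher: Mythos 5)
Your overall skeleton matches the paper's: reduce to an algebraically closed field, conjugate maximal tori containing the diagonal tori, apply Corollary~\ref{cor1}, and read off edges from brackets. But there is a genuine gap exactly where you flag ``the main obstacle,'' and your proposed workaround does not close it. You want to adjust $\varphi$ so that it intertwines $T_d^{(1)}$ and $T_d^{(2)}$ ``or at least their centralizers''; nothing in Lemmas~\ref{le1}--\ref{le2} gives you that, and it is not clear it can be arranged: the two diagonal tori sit inside conjugate maximal tori, but they need not themselves be conjugate, nor need their centralizers be. The paper's resolution requires no such intertwining. After identifying the two algebras and choosing $g\in\Ai$ with $T=g^{-1}\hat{T}g$, it takes a single generic $g_{\La}\in T_d$ (with $1$, $\la_i$, $\la_i\la_j$ pairwise distinct) and observes that $\hat{g}=g\,g_{\La}\,g^{-1}$ lies in $\hat{T}$, hence automatically in $Z_{\Ai}(\hat{T}_d)$ because $\hat{T}$ is abelian and contains $\hat{T}_d$. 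Corollary~\ref{cor1} then applies to $\hat{g}$ with respect to the \emph{second} graph, and since each $\la_i$ is a simple eigenvalue of $\hat{g}$ with eigenvector $g(e_i)$, each $g(e_i)$ must be a scalar multiple of a vertex vector, of an edge vector, or a central element. That one observation replaces your steps (1)--(2), and you have not supplied a substitute for it.

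A second, smaller problem: your step (1) asserts that $V$ can be recognized intrinsically and that $\varphi(V_1)=V_2$. This fails for isolated vertices, whose lines lie in $Z(\mathfrak{g})$ and cannot be separated from central lines in $U$ or from edge lines in $E_{\Gamma}$; and even for non-isolated vertices the paper does not prove $g(V)=V$ directly. It rules out the edge and central alternatives for $g(e_i)$ only when $i$ is non-isolated, using that $g(e_i\wedge e_k)=[g(e_i),g(e_k)]\neq 0$ for an edge $(ik)$ forces both images to be vertex vectors, since an edge vector brackets trivially with everything in $V\oplus W$ (your proposed discriminant via $\mathfrak{g}^{(2)}$ does not work for edges outside $E_{\Delta}$). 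This produces an edge-preserving injection on non-isolated vertices; a symmetric argument upgrades it to a bijection, and the isolated vertices are matched at the very end by comparing dimensions of $\mathfrak{g}$ computed from the two graphs, not via the center. So the final ``bookkeeping'' in your sketch conceals a counting step that actually needs to be made.
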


\begin{proof}
Clearly, by construction, isomorphic graphs lead to isomorphic Lie algebras, so we focus below on the opposite direction.

Note first that if two Lie algebras $\mathfrak{g}$ and $\hat{\mathfrak{g}}$ over a field $\kset$, which is not algebraically closed, are isomorphic, then the natural extensions of these algebras:  $\mathfrak{g}\otimes_{\kset} \overline{\kset}$ and $\hat{\mathfrak{g}}\otimes_{\kset} \overline{\kset}$ to the algebraic closure $\overline{\kset}$ of $\kset$ are also isomorphic. Thus, without any restriction, we can assume that  $\kset$ is algebraically closed. Moreover, using the isomorphism we can identify  $\mathfrak{g}$ and $\hat{\mathfrak{g}}$ and therefore the main point is to show that if a solvable Lie algebra $\mathfrak{g}$ can be constructed from two graphs $\mathcal{G}$ and $\hat{\mathcal{G}}$, then the graphs $\mathcal{G}$ and $\hat{\mathcal{G}}$ are isomorphic.

Let $T_d$ and $\hat{T}_d$ be the tori constructed in the previous subsection from $\mathcal{G}$ and $\hat{\mathcal{G}}$, respectively, and let $T\supset T_d$ and  $\hat{T}\supset \hat{T}_d$ be maximal tori in  $\Ai$. Since $\Ai$ is a connected linear algebraic group, there exists $g\in \Ai$ such that $T=g^{-1}\hat{T}g$. Fix $g_{\La}\in T_d$ such that the numbers $1$, $\la_i$ for ${i\in V(\mathcal{G})}$, and $\la_i\la_j$ for ${(ij)\in E(\mathcal{G})}$ are distinct and let $\hat{g}=g g_{\La}g^{-1}\in \hat{T}$. Note that for every vertex $i\in V(\mathcal{G})$ we have $\hat{g} (g(e_i))=g(g_{\La}(e_i))=\la_i g(e_i)$. Since $\la_i$ is a simple eigenvalue of $\hat{g}\in Z_{\Ai}(\hat{T}_d)$, Corollary~\ref{cor1} implies that $g(e_i)$ is
\begin{itemize}
\item either $\mu_i \hat{e}_{i'}$ for a vertex $i'\in V(\hat{\mathcal{G}})$ and $\mu_i\in\kset^{*}$,
\item or $\mu_{i} (\hat{e}_{i'}\wedge \hat{e}_{j'})$ for an edge $(i'j')\in E(\hat{\mathcal{G}})$ and $\mu_i\in\kset^{*}$,
\item or belongs to $Z(\mathfrak{g})$.
\end{itemize}
We show next that if $i\in V(\mathcal{G})$ is a non-isolated vertex, then $g(e_i)=\mu_{i}\hat{e}_{i'}$ for a non-isolated vertex $i'\in V(\hat{\mathcal{G}})$. Indeed, it is clear that $g(e_i)\not\in Z(\mathfrak{g})$. Moreover, if $(ik)\in E(\mathcal{G})$, then $g(e_k)$ must also be either $\mu_{k}\hat{e}_{k'}$ for a non-isolated vertex $k'\in V(\hat{\mathcal{G}})$ or $\mu_{k} (\hat{e}_{k'}\wedge \hat{e}_{l'})$ for an edge $(k'l')\in E(\hat{\mathcal{G}})$. From Definition~\ref{def1} it follows that $g(e_i\wedge e_k)=[g(e_i),g(e_k)]$ can be nonzero only if $g(e_i)=\mu_{i}\hat{e}_{i'}$ and $g(e_k)=\mu_{k}\hat{e}_{k'}$.

The above argument shows that the number of non-isolated vertices of $\mathcal{G}$ is less or equal than the number of non-isolated vertices of $\hat{\mathcal{G}}$. Exchanging the roles of  $\mathcal{G}$ and $\hat{\mathcal{G}}$ we see that they must contain the same number of non-isolated vertices, and for every non-isolated vertex $i\in V(\mathcal{G})$ there exists a unique  non-isolated vertex $i'\in V(\hat{\mathcal{G}})$ such that  $g(e_i)=\mu_{i}\hat{e}_{i'}$ for some $\mu_i\in\kset^{*}$. Moreover, for non-isolated vertices $i,j\in V(\mathcal{G})$ we have
$$g([e_i,e_j])=\mu_i\mu_j[\hat{e}_{i'},\hat{e}_{j'}].$$
The left-hand side is nonzero if and only if $(ij)$ is an edge in $\mathcal{G}$, while the right-hand side is nonzero if and only if $(i'j')$ is an edge in $\hat{\mathcal{G}}$.

If $\mathcal{G}'$ and $\hat{\mathcal{G}'}$ are the subgraphs obtained by removing the isolated vertices from $\mathcal{G}$ and $\hat{\mathcal{G}}$, respectively, then the map $i\to i'$ defines an isomorphism between the graphs $\mathcal{G}'$ and $\hat{\mathcal{G}'}$. In particular, this implies that $\mathcal{G}$ and $\hat{\mathcal{G}}$ have the same number of non-isolated vertices, edges and $3$-cliques, and therefore they must also have the same number of isolated vertices (by computing the dimension of $\mathfrak{g}$ using  $\mathcal{G}$ and $\hat{\mathcal{G}}$). It is easy to see now that the graphs $\mathcal{G}$ and $\hat{\mathcal{G}}$ are also isomorphic.
\end{proof}

\begin{rem}
The proof of Theorem~\ref{main} can be easily extended to the solvable Lie algebras constructed from $k$-cliques defined in Remark~
\ref{rem-k-clique}. More precisely, for fixed $k\geq 3$, two solvable Lie algebras $\mathfrak{g}(k)$ and  $\hat{\mathfrak{g}}(k)$ constructed from graphs are isomorphic if and only if the graphs are isomorphic. The isomorphisms of the Lie algebras defined in Remark~\ref{rem-param} are more delicate since rescaling even some of the weights may lead to isomorphic Lie algebras.
\end{rem}

\section{Metric properties}\label{se4}

As an application, in this  section we point out some metric properties of the solvable Lie algebras constructed from graphs. The most interesting properties appear on algebras corresponding to graphs with "many" $3$-cliques - such that every vertex belongs to some $3$-clique. Since we are interested in the metric properties, {\it in this section we fix $\mathbb{k}=\mathbb{R}$}.

Recall that a solvable Lie algebra over $\mathbb{R}$ is {\it completely solvable} if all inner derivations have only real eigenvalues (see e.g. \cite{H}).
For a Lie algebra $\mathfrak{g}$ and an element $B\in \mathfrak{g}$ we use $\ad_B$ to denote the adjoint map $\ad_B: \mathfrak{g}\to  \mathfrak{g}$ defined by $\ad_BC=[B,C]$.
If $\mathfrak{g} = \mathfrak{a}\oplus \mathfrak{g}'$ for a vector space $\mathfrak{a}$, then $\mathfrak{g}$ is completely solvable if and only if $\ad_B$ has only real eigenvalues for all $B\in\mathfrak{a}$. We will make use of the following:

\begin{prop}\label{c-solv}
The solvable Lie algebra $\mathfrak{g}$ corresponding to a graph in which every vertex belongs to some $3$-clique
is completely solvable.
\end{prop}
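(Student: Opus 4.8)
The plan is to apply the criterion recalled just above the statement: for any vector-space complement $\mathfrak{a}$ of the commutator subalgebra $\mathfrak{g}'$ we have $\mathfrak{g}=\mathfrak{a}\oplus\mathfrak{g}'$, and $\mathfrak{g}$ is then completely solvable if and only if $\ad_B$ has only real eigenvalues for every $B\in\mathfrak{a}$.

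\emph{Step 1 (locating $\mathfrak{g}'$ and a good complement).} When every vertex of $\mathcal{G}$ belongs to some $3$-clique we have $V_{\Delta}=V(\mathcal{G})$, so Proposition~\ref{prop2.4}(a) gives $\mathfrak{g}'=V\oplus W$. Hence $\mathfrak{g}=U\oplus\mathfrak{g}'$, and it is enough to show that $\ad_B$ has only real eigenvalues for every $B\in U$.

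\emph{Step 2 (diagonalizing $\ad_B$ for $B\in U$).} Write $B=\sum_{(ijk)\in C(\mathcal{G})}c_{ijk}e_{ijk}$ with all $c_{ijk}\in\R$. Since the bracket vanishes identically on $U$, $\ad_B(U)=0$. Taking $l=B$ (so that all $a_i=b_{ij}=0$) in \eqref{cv}, each vertex vector $e_i$ is an eigenvector of $\ad_B$ with real eigenvalue
\begin{equation*}
\la_i:=-\sum_{(jk)\in C(i)}c_{ijk}\in\R .
\end{equation*}
For an edge $(ij)\in E(\mathcal{G})$, Definition~\ref{def1} shows that $[e_i\wedge e_j,e_{klm}]$ equals $|\{i,j\}\cap\{k,l,m\}|\,(e_i\wedge e_j)$ for every $3$-clique $(klm)$; writing this coefficient as the contribution of $i$ plus that of $j$ and summing over all $3$-cliques yields
\begin{equation*}
\ad_B(e_i\wedge e_j)=[B,e_i\wedge e_j]=-\Big(\sum_{(lm)\in C(i)}c_{ilm}+\sum_{(lm)\in C(j)}c_{jlm}\Big)(e_i\wedge e_j)=(\la_i+\la_j)(e_i\wedge e_j).
\end{equation*}
Therefore, in the standard basis $\{e_i\}\cup\{e_i\wedge e_j\}\cup\{e_{ijk}\}$ the operator $\ad_B$ is diagonal with entries $\la_i$, $\la_i+\la_j$ and $0$, all of which are real. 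Together with Step~1 and the criterion above, this proves the proposition.

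I do not expect a genuine obstacle: the computation in Step~2 is routine, and the only place where the hypothesis is really used is the identification $\mathfrak{g}'=V\oplus W$ in Step~1, which is exactly what makes the abelian space $U$ a complement of $\mathfrak{g}'$ and reduces the problem to the very transparent operators $\ad_B$ with $B\in U$.
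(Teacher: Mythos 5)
Your proposal is correct and follows essentially the same route as the paper: identify $\mathfrak{g}'=V\oplus W$ from Proposition~\ref{prop2.4}(a) under the hypothesis $V_\Delta=V(\mathcal{G})$, take $U$ as the complement, and observe that $\ad_B$ is diagonal with real eigenvalues on the standard basis for every $B\in U$. The paper merely states this for the basis elements $e_{ijk}$ (eigenvalues $0$, $-1$, $-2$), while you carry out the same diagonality computation for an arbitrary linear combination; both are valid and amount to the same argument.
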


\begin{proof}
We only have to note that the condition leads to $\mathfrak{g}'=V\oplus W$ and all basic elements of $U$ have adjoint endomorphisms acting diagonally on the basis  $\{e_{i}\}_{i\in V(\mathcal{G})}\cup \{e_{i}\wedge e_{j}\}_{(ij)\in E(\mathcal{G})}$ with eigenvalues $0$, $-1$, or $-2$.
\end{proof}

Every solvable Lie algebra has an associated simply connected solvable Lie group and the metric structures of such groups have been studied by several authors. The emphasis is on the study of the left-invariant metrics, which are determined by a scalar product on the Lie algebra. Let $G$ be the simply connected Lie group with Lie algebra $\mathfrak{g}$ constructed from the graph $\mathcal{G}$. Let $g$ be an inner product on $\mathfrak{g}$ which defines a left-invariant Riemannian metric on $G$. Denote by $R(X,Y)Z= \nabla_X\nabla_YZ-\nabla_Y\nabla_XZ - \nabla_{[X,Y]}Z$ the curvature of $g$ where $\nabla$ denotes the Levi-Civita connection. Then $R$ is determined by:

\begin{align*}
g(R(X,Y)Y,X) & = -\frac{3}{4}||[X,Y]||^2 - \frac{1}{2}g(\ad_X^2Y,Y) - \frac{1}{2}g(\ad_Y^2X,X)\\
 &\quad - g(\ad_X^tX, \ad_Y^tY)+ \frac{1}{4}||\ad_X^tY+\ad_Y^tX||^2,
\end{align*}
for $X,Y\in \mathfrak{g}$.

 A metric has nonpositive curvature if $g(R(X,Y)Y,X)\leq 0$ for all vectors $X$ and $Y$. The curvature tensor $g(R(X,Y)Z,T))$ which is obtained by polarization from $g(R(X,Y)Y,X)$ defines also the curvature operator $\mathcal{R}: \Lambda^2(\mathfrak{g}) \rightarrow \Lambda^2(\mathfrak{g})$ as $g(\mathcal{R}(X\wedge Y), Z\wedge T) = g(R(X,Y)Z,T)$. From the basic curvature properties it follows that $\mathcal{R}$ is a symmetric operator and a metric for which $\mathcal{R}$ is negative semi-definite is called a metric with {\it nonpositive curvature operator}. A metric with nonpositive curvature operator has nonpositive curvature, while the converse is not always true.
  It is known that a Riemannian homogeneous space with nonpositive curvature operator is a direct product of a flat space and solvable Lie group with left-invariant metric with some "Iwasawa type" condition \cite{AW1, Wr}. A metric solvable Lie algebra $(\mathfrak{g}, g)$ is said to be of {\it Iwasawa type} if it satisfies (see \cite[p.~302]{H}) the following:
\begin{itemize}
\item[(a)] $\mathfrak{g} = \mathfrak{a}\oplus \mathfrak{g}'$, for an abelian complement $\mathfrak{a}$ orthogonal to $\mathfrak{g}'$.
\item[(b)] All operators $\ad_B$ for $B\in\mathfrak{a}$ are symmetric on $\mathfrak{g}$ with respect to $g$, and nonzero for $B\neq 0$.
\item[(c)] There exists a vector $B^0\in \mathfrak{a}$, such that $\ad_{B^{0}}|_{\mathfrak{g}'}$ is positive definite.
\end{itemize}
According to \cite[Theorem 3.2]{Wr} a metric solvable Lie algebra of Iwasawa type admits a scalar product $g'$ with nonpositive curvature operator.

The Ricci curvature is given by $\Ric(X,Y)=\sum_{i} g(R(X,E_i)E_i,Y)$, for an orthonormal basis  $E_i$ of the tangent space. It
defines a Ricci operator $\mathcal{R}ic(X)$ as $g(\mathcal{R}ic(X),Y) = \Ric(X,Y)$.
If $\tr A$ is the trace of the endomorphism $A:\mathfrak{g}\rightarrow\mathfrak{g}$ and $H_g$ is defined as $g(H_g,Y)=\tr(\ad_Y)$, the formula for the Ricci tensor on the metric solvable Lie algebra $\mathfrak{g}$ satisfying (a) and (b) of the conditions for Iwasawa type above is given by \cite{H}:
\begin{equation}\label{Ric}
\Ric(X,X) = -\frac{1}{2}\tr(\ad_X^2)-\frac{1}{2}\tr (\ad_X\,\ad_X^t)+\frac{1}{4}\sum_{i<j}g([E_i,E_j], X)^2 - g(\ad_{H_g} X,X).
\end{equation}

The metric with the property $\Ric(X,Y)=\lambda g(X,Y)$ for a scalar $\lambda$ is called {\it Einstein}. Such metrics are of significant interest in Riemannian geometry and the only known examples of homogeneous Einstein manifolds with negative $\lambda$ are provided by solvable Lie groups. Although our solvable Lie algebras arising from graphs do not provide new examples of  Einstein metrics, they satisfy certain related conditions. One of them, which appeared in the developments of the Ricci flow is {\it Ricci soliton}. A metric $g$ is called a {\it Ricci soliton} if there is a constant $\lambda$ and a vector field $X$ such that $\Ric = cg+ Lie_Xg$ where $Lie_Xg$ is the Lie derivative of $g$ in direction of $X$, given by $(Lie_Xg)(Y,Z) = X(g(Y,Z))-g([X,Y],Z)-g(Y,[X,Z])$ for arbitrary vector fields $Y,Z$. A Ricci soliton on a solvable (resp. nilpotent) Lie algebra is also called {\it a solvsoliton} (resp. {\it a nilsoliton}). Sometimes the Lie algebras admitting solvsolitons (or nilsoliton) metrics are also called solvsoliton (nilsoliton respectively). It is known that a sufficient condition for a metric $g$ to be a solvsoliton is
$$\mathcal{R}ic(X) = cX + D(X)\qquad \text{ for every }X,$$
where $c$ is some constant, and $D$ is a derivation of the Lie algebra.
Another notion which appears in relation to the Ricci flow on parallelizable manifolds is {\it stable Ricci-diagonal basis} \cite{LW}. A basis  $(X_1, X_2, ..., X_n)$ of vector fields of a Lie algebra $\mathfrak{g}$ is called {\it stably Ricci-diagonal} if the Ricci operator of any diagonal invariant metric $g$ (so $g(X_i, X_j)=0$ for $i\neq j$) in  $\mathfrak{g}$ is diagonal in it.

The main metric properties for algebras arising from graphs with $3$-cliques adjacent to every vertex are given in the following:

\begin{theorem}\label{metric}
Let every vertex of the graph  $\mathcal{G}$ belong to some $3$-clique. Then the simply-connected solvable Lie group $G$ with Lie algebra $\mathfrak{g}$ constructed from $\mathcal{G}$ satisfies:
\begin{enumerate}
\item  There is a metric $g$ such that $(G, g)$ has a nonpositive curvature and is isometric to a product of $G_1\times G_2$ where $G_2$ is an abelian Lie group, corresponding to the center of $\mathfrak{g}$ with a flat metric and $G_1$ is solvable Lie group with Lie algebra $\mathfrak{g}/Z(\mathfrak{g})$. Moreover $G_1$ has a metric with nonpositive curvature operator with no flat factor.
\item The basis $e_i, e_j\wedge e_k, e_{ijk}$  is stably Ricci-diagonal.
\item  If $\mathfrak{g} '$ is a nilsoliton, then $\mathfrak{g}$ is a solvsoliton.
\end{enumerate}
\end{theorem}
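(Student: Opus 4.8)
We sketch the argument; the backbone is a pair of reductions available because every vertex of $\mathcal{G}$ lies in a $3$-clique. By Proposition~\ref{prop2.4}(a), $\mathfrak{g}'=V\oplus W$, and by Proposition~\ref{ZandNR}(i), $Z(\mathfrak{g})=\ker(A)\subseteq U$. Fix a linear complement $U_0$ of $\ker(A)$ in $U$ and put $\mathfrak{g}_1:=U_0\oplus V\oplus W$. Then $\ker(A)$ is an abelian ideal, $\mathfrak{g}_1$ is an ideal with $\mathfrak{g}_1\cong\mathfrak{g}/Z(\mathfrak{g})$ and nilradical $\mathfrak{g}_1'=V\oplus W$, and $\mathfrak{g}=\mathfrak{g}_1\oplus\ker(A)$ is a direct sum of ideals, so $G=G_1\times G_2$ with $G_2=\R^{\dim\ker(A)}$. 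Let $B^0\in U_0$ be the $U_0$-component of $-\sum_{(ijk)\in C(\mathcal{G})}e_{ijk}$; since $\ker(A)$ is central, $\ad_{B^0}$ acts on $V\oplus W$ diagonally (Proposition~\ref{c-solv}), with eigenvalues $|C(i)|>0$ on $e_i$ and $|C(a)|+|C(b)|>0$ on $e_a\wedge e_b$, hence $\ad_{B^0}|_{V\oplus W}$ is positive definite. This forces $Z(\mathfrak{g}_1)=0$: a central $z=a+m$ with $a\in U_0$, $m\in V\oplus W$ satisfies $\ad_a|_{V\oplus W}=-\ad_m|_{V\oplus W}$, which is both diagonalizable and nilpotent, hence $a=0$ (as $U_0\cap\ker(A)=0$), and then $\ad_{B^0}m=0$ gives $m=0$.

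For (1) the plan is to equip $\mathfrak{g}_1=\mathfrak{a}\oplus\mathfrak{g}_1'$, $\mathfrak{a}:=U_0$, with the inner product making $\{e_i\}\cup\{e_a\wedge e_b\}$ orthonormal, $\mathfrak{a}\perp\mathfrak{g}_1'$ (any inner product on $\mathfrak{a}$), and to check the Iwasawa-type axioms (a)--(c): (a) is immediate; (b) holds because each $\ad_B|_{\mathfrak{g}_1'}$ ($B\in U_0$) is diagonal, hence symmetric, and nonzero for $B\neq0$; (c) holds with $B^0$ by the eigenvalue computation above. By \cite[Theorem~3.2]{Wr}, $\mathfrak{g}_1$ then carries a scalar product $g'$ with nonpositive curvature operator, and since $(\mathfrak{g}_1,g')$ is of Iwasawa type with trivial center it has no flat de Rham factor (cf.\ \cite{AW1,Wr}). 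Putting $g:=g'\oplus\langle\cdot,\cdot\rangle_{\ker(A)}$ for an arbitrary inner product on $Z(\mathfrak{g})=\ker(A)$ presents $(G,g)$ as the Riemannian product $(G_1,g')\times(G_2,\text{flat})$, which therefore has nonpositive curvature.

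For (2) I would fix a diagonal invariant metric $g$ and apply \eqref{Ric}. Since $U\perp\mathfrak{g}'=V\oplus W$ and every $\ad_{e_{ijk}}$ is $g$-symmetric, the Ricci operator has no $U$--$\mathfrak{g}'$ cross term; its $\mathfrak{g}'$-block is the Ricci operator of the $2$-step nilpotent graph algebra $\mathfrak{g}'$ modified by terms built from $\ad_{H_g}$ and from the $\ad_B$ ($B$ ranging over a $g$-orthonormal basis of $U$), all of which are diagonal in $\{e_i\}\cup\{e_a\wedge e_b\}$ — a nice basis of $\mathfrak{g}'$ — so this block is diagonal. It then remains to treat the $U$-block, i.e.\ to show $\Ric(e_{ijk},e_{lmn})=0$ for $(ijk)\neq(lmn)$; one runs through \eqref{Ric}, organizing the contributions according to $|\{i,j,k\}\cap\{l,m,n\}|$ and using that the mean curvature vector $H_g\in U$ is determined by $g(H_g,e_{ijk})=\tr\ad_{e_{ijk}}$ (so that its bracket terms drop out). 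This $U$-block is the step I expect to be the main obstacle, since the diagonal derivations attached to distinct $3$-cliques need not act on disjoint coordinate subspaces of $\mathfrak{g}'$.

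For (3), assume $\mathfrak{g}'=V\oplus W$ is a nilsoliton. As $\{e_i\}\cup\{e_a\wedge e_b\}$ is a nice basis of $\mathfrak{g}'$ (each bracket of two basis vectors is a single basis vector, and $e_a\wedge e_b$ is the target only of $[e_a,e_b]$), the nilsoliton metric may be taken diagonal in it, with $\Ric_{\mathfrak{g}'}=cI+D$, $c<0$, $D$ a diagonal derivation. A Riemannian product of a solvsoliton and a flat $\R^k$ is a solvsoliton, so it suffices that $\mathfrak{g}_1$ be a solvsoliton; but $\mathfrak{g}_1=\mathfrak{a}\ltimes\mathfrak{g}_1'$ with nilradical $\mathfrak{g}_1'=V\oplus W$ and $\mathfrak{a}=U_0$ acting injectively on $\mathfrak{g}_1'$ by commuting, $\langle\cdot,\cdot\rangle'$-symmetric (diagonal) derivations, which also commute with $D$. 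This is exactly Lauret's situation of solvable extensions of nilsolitons (\cite{L2}; cf.\ \cite{LW1,Laf}): taking on $\mathfrak{a}$ the inner product $(B,B')\mapsto -c^{-1}\tr(\ad_B\ad_{B'}|_{\mathfrak{g}_1'})$ (positive definite since $c<0$) together with $\langle\cdot,\cdot\rangle'$ on $\mathfrak{g}_1'$ yields $\mathcal{R}ic_{\mathfrak{g}_1}=cI+D'$ with $D'|_{\mathfrak{a}}=0$ and $D'|_{\mathfrak{g}_1'}=D-\ad_H$, and $D'$ is a derivation of $\mathfrak{g}_1$ because $D$, $\ad_H$ and all $\ad_B$ mutually commute while $D'B=0$. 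Hence $\mathfrak{g}_1$, and therefore $\mathfrak{g}$, admits a solvsoliton metric.
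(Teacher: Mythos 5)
Your treatment of parts (1) and (3) follows essentially the same route as the paper: the orthogonal splitting $\mathfrak{g}=\mathfrak{g}_1\oplus\ker(A)$ with $\mathfrak{g}_1=\overline{\mathfrak{a}}\oplus V\oplus W$, the verification that $\mathfrak{g}_1$ is of Iwasawa type via the diagonality of the $\ad_B$ and the element $B^0=-\sum_{(ijk)}e_{ijk}$ (the paper uses Remark~\ref{ap} to place $B^0$ in $(\ker A)^{\perp}$, which is equivalent to your taking its $U_0$-component, since the $\ker(A)$-part acts trivially), the appeal to Wolter for the nonpositive curvature operator, and Heber's Ricci formulas together with Lauret's solvsoliton criterion for (3). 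Your additions are correct and useful: the check that $Z(\mathfrak{g}_1)=0$ substantiates the ``no flat factor'' claim, and the explicit inner product $(B,B')\mapsto -c^{-1}\tr(\ad_B\ad_{B'}|_{\mathfrak{g}_1'})$ on $\mathfrak{a}$ spells out the construction that the paper delegates to \cite[Theorem 4.3]{L}.

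For part (2) the obstacle you flag in the $U$-block is genuine, and it cannot be removed by ``running through \eqref{Ric}''; the paper's own proof does not address it either (it only treats the $\mathfrak{g}'$-block via the diagonality of $\Ric_{\mathfrak{g}'}$ and $\ad_{H_g}$, and the vanishing cross terms). Indeed, for any diagonal invariant metric the quoted formula gives $\Ric(e_{ijk},e_{lmn})=-\tr(\ad_{e_{ijk}}\ad_{e_{lmn}})$, a metric-independent quantity: both operators are diagonal on $V\oplus W$ with entries in $\{0,-1,-2\}$, so the trace is a sum of nonnegative products and is strictly positive as soon as the two $3$-cliques share a vertex or are joined by an edge. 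Concretely, for $K_4$ one finds $\tr(\ad_{e_{123}}\ad_{e_{124}})=2+13=15$, hence $\Ric(e_{123},e_{124})=-15\neq 0$ while $g(e_{123},e_{124})=0$, so the Ricci operator is not diagonal in the stated basis. Thus the claim as literally stated fails on the $U$-part whenever distinct $3$-cliques interact; what your argument (and the paper's) actually proves is Ricci-diagonality on the $V\oplus W$ part of the basis together with the vanishing of the $U$--$(V\oplus W)$ cross terms. To salvage the $U$-part one would have to replace $\{e_{ijk}\}$ by a basis of $\overline{\mathfrak{a}}$ diagonalizing the symmetric form $(B,C)\mapsto\tr(\ad_B\ad_C)$, or restrict to graphs whose $3$-cliques are pairwise disjoint and non-adjacent. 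You were right to identify this as the critical step rather than assert it.
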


\begin{proof}
Take the metric for which the basis corresponding to the vertices, edges and $3$-cliques is orthonormal.
Notice that the condition implies that $\mathfrak{g}'=V \oplus W$ and therefore
$\mathfrak{g} = \mathfrak{a}\oplus\mathfrak{g}'$ where $\mathfrak{a}=U$ is an abelian complement spanned by the vectors corresponding to $3$-cliques. From Proposition~\ref{ZandNR} it follows that $Z(\mathfrak{g})=\ker A $, where $A$ is the $3$-clique incidence matrix and we can decompose $\mathfrak{a}$ as $\mathfrak{a} = \ker A \oplus \overline{\mathfrak{a}}$, with $\overline{\mathfrak{a}}=(\ker A)^{\perp}$. If we set $\mathfrak{g}_1= \mathfrak{g}'\oplus\overline{\mathfrak{a}}$ and $\mathfrak{g}_2 = \ker A$, then $\mathfrak{g} = \mathfrak{g}_1\oplus\mathfrak{g}_2$ is an orthogonal decomposition with corresponding decomposition of simply connected Lie groups $G = G_1\times G_2$. We claim that this decomposition has the properties mentioned in part $(1)$. Clearly, $G_2$ is an abelian Lie group, corresponding to the center  $\mathfrak{g}_2=Z(\mathfrak{g})$ with a flat metric. To check that $G_1$ has nonpositive curvature operator with no flat factor we show  that $\mathfrak{g}_1$ is of Iwasawa type. Note first that $\mathfrak{g}'=[\mathfrak{g},\mathfrak{g}]=[\mathfrak{g}_1,\mathfrak{g}_1]=\mathfrak{g}_1'$. Observe also that for any $B \in \overline{\mathfrak{a}}$, $\ad_B$ is diagonal on the orthonormal basis of $ \mathfrak{g}'$ and therefore symmetric. Moreover, $\ad_B|_{{\mathfrak{g}'}}=0$ if and only if $B\in Z(\mathfrak{g})=\ker A$, hence $\ad_B|_{{\mathfrak{g}'}}\neq 0$ for $0\neq B\in \overline{\mathfrak{a}}$. Finally, from Remark~\ref{ap} we see that
\begin{equation*}
B^{0}=-\sum_{(ijk)\in C(\mathcal{G})}e_{ijk} \in \overline{\mathfrak{a}}.
\end{equation*}
Since  $\ad_{B^0}|_{\mathfrak{g}'}$ is positive definite, $\mathfrak{g}_1$ is of Iwasawa type and the proof of (1) follows from \cite{Wr}.

To prove parts (2) and (3), note that  we only have to check them for $\mathfrak{g}_1$. The Ricci tensor satisfies the following (see equation~\eqref{Ric} or \cite[Lemma~4.4]{H}):
 \begin{align*}
&\Ric(B,C)= -\tr(\ad_B\,\ad_C)\\
& \Ric(B, X)= 0\\
&\Ric(X,Y) = \Ric_{\mathfrak{g}'}(X,Y) - g([H_g, X], Y)
\end{align*}
for $B,C \in\overline{\mathfrak{a}}$ and $X,Y \in \mathfrak{g}'$. From this and the fact that $\ad_{H_g}$ is diagonal part (2) follows. Then (3) follows from \cite[Theorem 4.3]{L}.
\end{proof}

\begin{ex}
Consider again the complete graph $K_n$. The algebra $\mathfrak{g}'$ is the $2$-step nilpotent Lie algebra of \cite{DM} and is the only one of type $(m,n)$ with $m = \frac{1}{2}n(n-1)$. Recall that a $2$-step nilpotent algebra is of type $(m,n)$ if its dimension is $m+n$ and its commutator has dimension $m$. The algebra $\mathfrak{g}'$ is an Einstein nilradical which is equivalent to being nilsoliton - see \cite{L2} for example.
\end{ex}

A further study of nilpotent graph Lie algebras which are nilsolitons and their possible extensions to solvsolitons is presented in \cite{LW1, Laf}. The existence of nilsoliton metric is equivalent to the existence of a positive solution of a linear system with variables corresponding to weights of the edges \cite{LW1}. A graph for which such solution exists is called {\it positive}. It is noted in \cite{LW1} that a regular graph (in which all vertices have the same degree) is positive. In particular, apart from the complete graph $K_n$ in the example above, the dodecahedron graph also satisfies the conditions of Theorem \ref{metric} and provides an example of a solvsoliton. In \cite{Laf} a  necessary condition for existence of solutions for this system for a graph $\mathcal{G}$ is related to a simpler graph called the {\it coherence graph} of $\mathcal{G}$. To describe it, first call two vertices {\it similar}, if they have the same neighbors. Then the vertices of the coherence graph are the coherent components (equivalence classes) of vertices in $\mathcal{G}$ and between two components there is an edge, if there is at least one edge connecting the respective sets of vertices. Since a graph containing only one component is either the complete graph, or graph with no edges, the graphs satisfying Theorem \ref{metric} are precisely the ones for which all components are complete graphs. A solution for the system of equations for the nilsoliton reduces to a solution of a lower order system for weights of the edges of the coherence graph. In this way we can see that there are more examples of solvsolitons arising from graphs with 3-cliques mentioned in Section 5 of \cite{Laf}.

Finally we consider the question whether the graphs corresponding to  two isometric solvable Lie algebras are isomorphic. We note that for general metric solvable Lie algebras of nonpositive curvature this is not true \cite{Al2, AW2}. However in our case we have the following:

\begin{theorem}
 Let $\mathfrak{g}$ and $\hat{\mathfrak{g}}$ be two solvable Lie algebras arising from graphs, for which every vertex belongs to some $3$-clique. Assume that there are left-invariant metrics $g$ and $\hat{g}$ which are isometric, meaning that there is an isometry between the corresponding simply connected Lie groups. Then the Lie algebras as well as  the corresponding graphs are isomorphic.
\end{theorem}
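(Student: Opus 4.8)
The plan is to reduce everything to Theorem~\ref{main}. That theorem already shows that if $\mathfrak{g}$ and $\hat{\mathfrak{g}}$ are isomorphic as Lie algebras then $\mathcal{G}$ and $\hat{\mathcal{G}}$ are isomorphic graphs, so it suffices to prove that an isometry between the two left-invariant metrics forces $\mathfrak{g}$ and $\hat{\mathfrak{g}}$ to be isomorphic as abstract Lie algebras. The place where the $3$-clique hypothesis is used is Proposition~\ref{c-solv}: since every vertex of $\mathcal{G}$ (and of $\hat{\mathcal{G}}$) belongs to a $3$-clique, both $\mathfrak{g}$ and $\hat{\mathfrak{g}}$ are \emph{completely solvable}. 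This is precisely the feature that separates our situation from the counterexamples of \cite{Al2, AW2}, where non-isomorphic solvable Lie groups carry isometric left-invariant metrics of nonpositive curvature but fail to be completely solvable.

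The key input is the rigidity of completely solvable solvmanifolds: if $S_1,S_2$ are simply connected completely solvable Lie groups with left-invariant metrics $g_1,g_2$ and $(S_1,g_1)$ is isometric to $(S_2,g_2)$, then there is a Lie group isomorphism $S_1\to S_2$ carrying $g_1$ onto $g_2$; in particular $\mathrm{Lie}(S_1)\cong\mathrm{Lie}(S_2)$. This is a consequence of the description of the isometry group of a solvmanifold due to Gordon and Wilson --- for a completely solvable group every left-invariant metric is automatically ``in standard position'', so the given group is, up to conjugacy, the canonical solvable subgroup acting simply transitively inside the full isometry group, and any isometry therefore intertwines these canonical subgroups; see also the discussion in \cite{H} and the references therein. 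Applying this to $G$ and $\hat{G}$, which are simply connected by hypothesis and completely solvable by Proposition~\ref{c-solv}, we obtain $\mathfrak{g}\cong\hat{\mathfrak{g}}$, and then Theorem~\ref{main} yields $\mathcal{G}\cong\hat{\mathcal{G}}$, as claimed.

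If one prefers to keep track of the de~Rham decomposition, the same conclusion can be reached as follows. By Theorem~\ref{metric}(1), $(G,g)$ splits isometrically as $G_1\times G_2$ with $G_2$ the flat Euclidean factor corresponding to $Z(\mathfrak{g})$ and $G_1$ a completely solvable group carrying a metric with nonpositive curvature operator and no flat factor, and likewise for $(\hat{G},\hat{g})$. An isometry must match the Euclidean de~Rham factors, hence it restricts to an isometry $G_1\to\hat{G}_1$; complete solvability of $\mathfrak{g}_1=\mathfrak{g}/Z(\mathfrak{g})$ and $\hat{\mathfrak{g}}_1$ then gives $\mathfrak{g}_1\cong\hat{\mathfrak{g}}_1$, while matching the dimensions of the flat factors gives $\dim Z(\mathfrak{g})=\dim Z(\hat{\mathfrak{g}})$; since $\mathfrak{g}=\mathfrak{g}_1\oplus Z(\mathfrak{g})$ and $\hat{\mathfrak{g}}=\hat{\mathfrak{g}}_1\oplus Z(\hat{\mathfrak{g}})$ as Lie algebras, we again conclude $\mathfrak{g}\cong\hat{\mathfrak{g}}$. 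The one genuinely nontrivial step is invoking the completely-solvable isometry rigidity and checking that its hypotheses are met; everything after that is bookkeeping with Proposition~\ref{c-solv} and Theorem~\ref{main}.
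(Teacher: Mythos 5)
Your proof is correct and follows essentially the same route as the paper: the paper likewise reduces to the rigidity of completely solvable metric Lie groups (cited there as a theorem of Alekseevskii \cite{Al1} rather than Gordon--Wilson), checks the hypothesis via Proposition~\ref{c-solv}, and concludes with Theorem~\ref{main}. The de~Rham-decomposition variant you sketch is a reasonable alternative but is not needed; the direct application of the rigidity theorem suffices.
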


\begin{proof}

By a theorem of Alekseevskii \cite{Al1}, if two metric completely solvable Lie algebras are isometric, then they are isomorphic. By Proposition \ref{c-solv}$,\mathfrak{g}$ and $\hat{\mathfrak{g}}$ are completely solvable, so they are isomorphic. Then from Theorem \ref{main} the graphs of $\mathfrak{g}$ and $\hat{\mathfrak{g}}$ are isomorphic.
\end{proof}

{\bf Acknowledgements.} The first author would like to acknowledge the hospitality of Max Plank Institute for Mathematics in Bonn and Institute of Mathematics and Informatics of the Bulgarian Academy of Sciences in Sofia, where part of the work on this project has been done.

\end{document}